\newcommand{\EE}{\mathbb{E}}
\newcommand{\PP}{\mathbb{P}}
\newcommand{\R}{\mathbb{R}}
\newcommand{\C}{\mathbb{C}}
\newcommand{\HH}{\mathbb{H}}
\newcommand{\N}{\mathbb{N}}
\newcommand{\D}{\mathbb{D}}
\newcommand{\Z}{\mathbb{Z}}
\newcommand{\Ree}{\mbox{Re}\,}
\newcommand{\pa}{\partial}
\newcommand{\F}{{\cal F}}
\newcommand{\no}{\noindent}
\newcommand{\BGE}{\begin{equation}}
\newcommand{\BGEN}{\begin{equation*}}
\newcommand{\EDE}{\end{equation}}
\newcommand{\EDEN}{\end{equation*}}
\newcommand  \MC{{{\rm Cont}_d}}
\newcommand \mcon { \MC}
\newcommand{\Half}{{\mathbb H}}
\def\eps{\varepsilon}
\def\til{\widetilde}
\def\ha{\widehat}
\def\sem{\setminus}
\def\lin{\overline}
 \DeclareMathOperator{\rad}{rad}
 \DeclareMathOperator{\diam}{diam}
\DeclareMathOperator{\dist}{dist} 
\DeclareMathOperator{\hcap}{hcap}
  \DeclareMathOperator{\Imm}{Im}
\newtheorem{Lemma}{Lemma}[section]
\newtheorem{Theorem}{Theorem}[section]
\numberwithin{equation}{section}
\begin{document}

\title{Higher moments of the natural parameterization for SLE curves}
\author{Mohammad A. Rezaei}
\author{Dapeng Zhan\thanks{Research partially supported by NSF grant  DMS-1056840 and Sloan fellowship.}}
\affil{Michigan State University}
\renewcommand\Authands{ and }
\maketitle

\begin{abstract}
In this paper, we will show that the higher moments of the natural parametrization of $SLE$ curves in any bounded domain in the upper half plane is finite. We prove this by estimating the probability that an $SLE$ curve gets near $n$ given points.

\end{abstract}

\section{Introduction}   \label{Introsec}

A number of measures arise from statistical physics are believed to have conformally invariant scaling limits. In \cite{Sch}, a one-parameter family of measures on non-self-crossing curves in the upper half plane, called (chordal) Schramm-Loewner evolution ($SLE_\kappa$) is defined. Here we only work with chordal version so we omit chordal. By conformal invariance, it is extended to other simply connected domains. Later, it was shown that $SLE$ describes the limits of a number of models from physics so answering the question of conformal invariance for them. These models include  loop-erased random walk for $\kappa=2$ \cite{LSW}, Ising interfaces for $\kappa=3$ and $\kappa=16/3$ \cite{Smir2}, harmonic explorer for $\kappa=4$ \cite{SS}, percolation interfaces for $\kappa=6 $ \cite{Smir1}, and uniform spanning tree Peano curves for $\kappa=8$ \cite{LSW}.

In order to define $SLE$, Schramm used {\em capacity parametrization}. We will see the definition of $SLE$ as well as capacity parametrization in the next section. Capacity parametrization comes from Loewner evolution and makes it easy to analyze $SLE$ curves by Ito's calculus. In all the physical models that we have above, in order to show the convergence, we have to first parametrize them with discrete version of the capacity and then prove the convergence to $SLE$. This parametrization is very different from the {\em natural} parametrization that we have for them which is just the length of the curve.

In order to prove the same results with the natural parametrization, we need to define a natural length for $SLE$ curves. In \cite{Bf}, it is proved that the Hausdorff dimension of $SLE_\kappa$ is $d=\min\{2,1+\frac{\kappa}{8}\}$. In \cite{LS}, the authors conjectured that the Minkowski content of SLE should exist. They defined the natural parametrization in a different way using Doob-Meyer decomposition and proved the existence for  $\kappa <5.021...$. Moreover, they conjectured  that the natural length of $SLE$ can be defined in terms of $d$-dimensional Minkowski content. Here is how it is defined (see \cite{LR} for more details). Let
\[  \mcon(\gamma[0,t];r) =  r^{d-2}
                     {\rm Area} \left\{z: \dist(z,\gamma[0,t]) \leq r
                    \right\}. \]
                    Then the
$d$-dimensional content
 is
\begin{equation}  \label{minkcont}
              \mcon(\gamma[0,t]) =
               \lim_{r \rightarrow 0} \mcon(\gamma[0,t];r)  ,
                     \end{equation}
provided that the limit exists. If $\kappa>8$ the curve is space filling and $d=2$ so this is just the area and the problem is trivial. For $k<8$, the existence was shown in \cite{LR}. We assume for the purpose of this paper that $\kappa <8$. We call this parametrization, natural length or length from now on. Also a number of properties of the natural length were studied in \cite{LR}. For example the authors computed the first and second moments of the ``natural length''. Basically, this function is the appropriate scaled version of the probability that $SLE$ hits given point(s). Precisely, the $n$-point Green's function at $z_1,\cdots,z_n$ is
\BGE   G(z_1,\dots,z_n) = \lim_{r_1,\dots,r_n \rightarrow 0} \prod_{k=1}^n r_k^{d-2}  \, \PP\Big [\bigcap_{k=1}^n\{\dist(z_k,\gamma) \leq r_k\}\Big ], \label{mlti-green}\EDE
provided that the limit exists. The covariance rule of the Green's function is obvious, that is, if $F$ maps $(\HH;0,\infty)$ conformally onto $(D;w_1,w_2)$, then
\BGE
G_{(D;w_1,w_2)}(z_1,\dots,z_n)=|(F^{-1})'(z)|^{2-d}G_{(\HH;0,\infty)}(F^{-1}(z_1),\dots,F^{-1}(z_n)), \label{green-D}
\EDE
if the Green's function at either side exists. Here we use $G_{(D;w_1,w_2)}$ to denote the Green's function for $SLE_\kappa$ in $D$ from $w_1$ to $w_2$.

It is proved in \cite{LW} that a modified version of $1$-point and $2$-point Green's function using conformal distance instead of distance exists. In \cite{LR}, the authors prove the above limit exists for $n=1,2$.
Lawler and Werness mentioned in \cite{LW} that the argument can be generalized to define higher order Green's function. So they conjectured the existence of multi-point Green's function. For $n=1$ the exact formula is given in \cite{LR} which is
 \begin{equation}  \label{green}
     G(z)=G_{(\HH;0,\infty)}(z)
   = C |z|^{d-2}  \, \sin^{\frac
  \kappa 8 + \frac 8{\kappa} -2} (\arg z)  =C \Imm(z)^{d-2}
    \, \sin^{8/\kappa -1 }(\arg z) ,
\end{equation}
where $C=C_\kappa>0$ is an unknown constant. In arbitrary domains the exact formula of the $1$-point Green's function can be found by the covariance rule.

We now state the main theorems of this paper. Throughout, we fix $\kappa\in(0,8)$, the following constants depending on $\kappa$:
\[
d=1+\frac{\kappa}{8}, \qquad \alpha=\frac{8}{\kappa}-1.
\]
We will use $C$ to denote an arbitrary positive constant that depends only on $\kappa$, whose value may vary from one occurrence to another. If we allow $C$ to depend on $\kappa$ and another variable, say $n$, then we will use $C_n$.
We introduce a family of functions. For $y\ge 0$, define $P_y$ on $[0,\infty)$ by
$$ P_y(x)=\left\{\begin{array}{ll} y^{\alpha-(2-d)}x^{2-d},&  x\le y;\\  x^\alpha,& x\ge y.
\end{array}\right. $$
Since $\alpha\ge 2-d>0$, if $0\le x_1< x_2$, then
\BGE
\frac{x_1^\alpha}{x_2^\alpha}\le \frac{P_y(x_1)}{P_y(x_2)}\le \frac{x_1^{2-d}}{x_2^{2-d}}.\label{P-compare}\EDE

The first main theorem is:
\begin{Theorem}  \label{mainthm2}
  Let $z_0,\dots,z_n$ be distinct points on $\lin\HH$ such that $z_0=0$. Let $y_k=\Imm z_k\ge 0$ and $l_k=\dist(z_k,\{z_j:0\le j<k\})$, $1\le k\le n$. Let $r_1,\dots,r_n>0$. Let $\gamma$ be an $SLE_\kappa$ curve in $\HH$ from $0$ to $\infty$. Then there is $C_n<\infty$ depending only on $\kappa$ and $n$ such that
  $$\PP[\dist(\gamma,z_k)\le r_k,1\le k\le n]\le C_n\prod_{k=1}^n \frac{P_{y_k}(r_k\wedge l_k)}{P_{y_k}(l_k)}.$$
\end{Theorem}

\vskip 4mm
The second main theorem answers a question in \cite{LR}.

\begin{Theorem} \label{mainthm}
If $\gamma$ is an $SLE$ curve from 0 to $\infty$ in $\HH$, then for any bounded $D \subset \HH$, we have
\[
\EE[\mcon(\gamma \cap D)^n]<\infty,\quad n\in\N.
\]
\end{Theorem}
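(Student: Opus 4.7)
The plan is to express the $n$-th moment of $\mcon(\gamma\cap D)$ as an integral of a multi-point hitting probability over $D^n$, apply Theorem~\ref{mainthm2} to obtain an $r$-free upper bound on the integrand, and then show the resulting integral is finite by iterated integration. Concretely, setting $N_r(\gamma) := \{z \in \HH : \dist(z, \gamma) \le r\}$, the construction of the natural parametrization in \cite{LR} ensures $r^{d-2}\,\mathrm{Area}(D \cap N_r(\gamma)) \to \mcon(\gamma \cap D)$ almost surely as $r \to 0^+$. Expanding the $n$-th power of the area as an $n$-fold integral of indicators and applying Fatou's lemma followed by Fubini yields
\[
\EE[\mcon(\gamma\cap D)^n] \le \liminf_{r\to 0^+}\int_{D^n} r^{n(d-2)}\,\PP\bigl[\dist(z_k,\gamma)\le r,\ 1\le k\le n\bigr]\,dA(z_1)\cdots dA(z_n).
\]

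Next, I fix the ordering $z_0 = 0, z_1, \ldots, z_n$ and apply Theorem~\ref{mainthm2} with $r_k = r$ for every $k$. The right half of (\ref{P-compare}) applied with $x_1 = r \wedge l_k \le l_k = x_2$ (handling the degenerate case $r \ge l_k$ directly using $r^{d-2} \le l_k^{d-2}$ and $d<2$) gives the uniform-in-$r$ estimate
\[
r^{d-2}\cdot\frac{P_{y_k}(r\wedge l_k)}{P_{y_k}(l_k)} \le l_k^{d-2}.
\]
Multiplying over $k$ and combining with Theorem~\ref{mainthm2} produces $r^{n(d-2)}\,\PP[\cdots] \le C_n \prod_k l_k^{d-2}$, which is independent of $r$. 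Therefore
\[
\EE[\mcon(\gamma\cap D)^n] \le C_n \int_{D^n}\prod_{k=1}^n l_k^{d-2}\,dA(z_1)\cdots dA(z_n).
\]

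Finally, I bound the remaining integral by iterating integrations from $z_n$ inward, exploiting that each $l_k$ depends only on $z_0,\ldots,z_k$. Fixing $z_1,\ldots,z_{n-1}$ and setting $S = \{z_0, z_1, \ldots, z_{n-1}\}$, the fact that $d-2<0$ gives $\dist(z_n,S)^{d-2} = \max_{s\in S}|z_n-s|^{d-2} \le \sum_{s\in S}|z_n-s|^{d-2}$, hence
\[
\int_D l_n^{d-2}\,dA(z_n) \le \sum_{s\in S}\int_D |z_n - s|^{d-2}\,dA(z_n) \le n\,C_D,
\]
where $C_D := \sup_{w\in \lin\HH}\int_D |z-w|^{d-2}\,dA(z)$ is finite because $d>1$ makes $|z-w|^{d-2}$ locally integrable in $\R^2$ and the integral over bounded $D$ vanishes as $|w|\to\infty$. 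Iterating this argument over $z_{n-1},\ldots,z_1$ produces the total bound $C_n\cdot n!\,C_D^n < \infty$.

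The main obstacle I anticipate is the first step: one has to identify $\mcon(\gamma\cap D)$ with the appropriate limit of the Minkowski-sausage area restricted to $D$ and justify that the Fatou-Fubini chain is valid in this setting. Once that foundation is in place, the remaining work is a clean consequence of Theorem~\ref{mainthm2} together with the integrability of the $l^{d-2}$ singularity in two dimensions.
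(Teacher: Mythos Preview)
Your proposal is correct and follows essentially the same route as the paper: expand the $n$-th moment as an $n$-fold integral of the multi-point hitting probability, bound the integrand uniformly in $r$ by $C_n\prod_k l_k^{d-2}$ via Theorem~\ref{mainthm2} and (\ref{P-compare}), and then check integrability of $\prod_k l_k^{d-2}$ over $D^n$ by iterated integration using $l_k^{d-2}\le\sum_{j<k}|z_k-z_j|^{d-2}$. The only cosmetic differences are that the paper applies Fubini before Fatou (you do the reverse), and the paper invokes $d>0$ rather than $d>1$ for the local integrability of $|z|^{d-2}$ in the plane; neither changes the substance.
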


\no{\bf Remarks.}
\begin{enumerate}

\item The quantity on the right-hand side of the formula in Theorem \ref{mainthm2} depends on the order of the points $z_1,\dots,z_n$. However, if $r_j$'s are sufficiently small, say, $r_j<\dist(z_j,\{z_0,\dots,z_n\}\sem\{z_j\})$, then if we exchange any pair of consecutive points, i.e., $z_k$ and $z_{k+1}$, then the new quantity is no more than $C$ times the old quantity, where $C>0$ depends only on $\kappa$. Thus, if we permute those $n$ points, the quantity will increase at most $C^{n^2}$ times.

\item An immediate consequence of Theorem \ref{mainthm2} is that the right-hand side of (\ref{mlti-green}), with $\lim$ replaced by $\limsup$, is finite.

  \item In fact, Theorem \ref{mainthm2} implies an upper bound of the Green's function $G(z_1,\cdots,z_n)$ for the above $\gamma$, if it exists. That is
$$ G(z_1,\dots,z_n)\le C_n\prod_{k=1}^n \frac{y_k^{\alpha-(2-d)}}{P_{y_k}(l_k)}.$$ %\label{G-est}\EDE
A natural question to ask is whether the reverse inequality also holds (with smaller $C_n$). The answer is yes if $n\le 2$. In the case $n=1$, the right-hand side is $C\frac{y^{\alpha-(2-d)}}{|z|^\alpha}$, which agrees with the right-hand side of (\ref{green}). In the case $n=2$, the right-hand side is comparable to a sharp estimate of the $2$-point Green's function given in \cite{LR2} up to a constant. Thus, we expect that it holds for all $n\in\N$.

\item We guess that one can show $\EE[e^{\lambda \mcon(\gamma \cap D)}]<\infty$ for some $\lambda>0$ in any bounded domain $D$. This is nice because we can study natural length by its moment generating function. One way to prove it is to prove a similar bound for ordered multi-point Green's function but with $C^n$ instead of $C_n$. See \cite{LW} for the definition of ordered Green's function.

\item If the Green's function $G(z_1,\cdots,z_n)$ exits, the left-hand side of the displayed formula in Theorem \ref{mainthm} equals to $\int_{D^n} G(z_1,...,z_n) dA(z_1)...dA(z_n)$. % But our proof of Theorem \ref{mainthm} will not rely on the existence of the multi-point Green's function.
\item Theorem \ref{mainthm2} also provides an upper bound for the boundary Green's function, which is the scaled version of the probability that $SLE$ hits given boundary point(s). The scaling exponent will be $\alpha$ instead of $2-d$ so that the Green's function does not vanish. To be more precise, for the above $\gamma$, the boundary Green's function at $x_1,\dots,x_n\in\R\sem\{0\}$ is
    \BGE \tilde G(x_1,\dots,x_n)=\lim_{r_1,\dots,r_n \rightarrow 0} \prod_{k=1}^n r_k^{-\alpha}  \, \PP\Big [\bigcap_{k=1}^n\{\dist(x_k,\gamma) \leq r_k\}\Big ],\label{green-bdry}\EDE
    provided that the limit exists. Lawler recently proved in \cite{Law4} that the $1$-point and $2$-point boundary Green's function exist, and gave good estimates of these functions. Using Theorem \ref{mainthm2}, we can derive the following conclusions. First, the right-hand side of (\ref{green-bdry}), with $\lim$ replaced by $\limsup$, is finite. This result may help us to prove the existence of multi-point boundary Green's functions for $SLE$. Second, if $\tilde G(x_1,\cdots,x_n)$ exits, then $\tilde G(x_1,\dots,x_n)\le C_n\prod_{k=1}^n l_k^{-\alpha}$, where $l_k=\min_{0\le j<k} |x_k-x_j|$ with $x_0=0$. Similarly, we get upper bounds for mixed Green's functions, where some points lie on the boundary, and others lie in the interior.
\end{enumerate}

%Using the results of \cite{LR}, we will reduce the proof of this theorem to finding an upper bound of the probability that $\dist(z_k,\gamma)<r_k)$, $1\le k\le n$, which we will prove in the last section.

The organization of the rest of the paper goes as follows. In the next section we review the definition of $SLE$ and some fundamental estimates for $SLE$. In the third section, we will prove two main lemmas. At the end, we will prove the two main theorems.

\section*{Acknowledgement}
Both authors thank Gregory Lawler for his valuable comments on this project, and an anonymous referee for very helpful comments on an earlier version of this paper. Dapeng Zhan acknowledges the support from the National Science Foundation under the grant DMS-1056840 and the support from the Alfred P.\ Sloan Foundation.

\section{Preliminaries}  \label{Prelimsec}
\subsection{Definition of $SLE$}
In this subsection we review the definition of $SLE$ and its basic properties. See \cite{Law1,Law2,LW,LR} for more details.

A bounded set $K\subset\HH=\{x+iy: y > 0\}$ is called an $\HH$-hull if $\HH\sem K$ is a simply connected domain, and the complement $\HH\sem K$ is called an $\HH$-domain. For every $\HH$-hull $K$, there is a unique conformal map $g_K$ from $\HH\sem K$ onto $\HH$ that satisfies
$$g_K(z)=z+\frac{c}z+O(|z|^{-2}),\quad |z|\to\infty$$
for some $c\ge 0$. The number $c$ is called the half plane capacity of $K$, and is denoted by $\hcap(K)$.

Suppose  that $\gamma:(0,\infty) \rightarrow \Half $
is a simple curve with $\gamma(0+) \in \R$ and $\gamma(t) \rightarrow \infty$ as $t \rightarrow
\infty$. Then for each $t$, $K_t:=\gamma(0,t]$ is an $\HH$-hull. Let $g_t=g_{K_t}$ and $a(t)=\hcap(K_t)$. We can reparameterize the curve such that $a(t)=2t$.  %There are a number of ways of defining $\hcap$, e.g.,
%\[             \hcap(A) = \lim_{y \rightarrow \infty}
  % y\,  \E^{iy}[\Im(B_{\tau})], \]
%where $B$ is a complex Brownian motion and $\tau = \inf\{t:
%B_t \in \R \cup A\}$.
%by this we
%mean that
  %for each $t$,
%$\gamma(t) \in \p H_t$ and the image $V_t = g_t(\gamma(t))$
%is well defined and a continuous function of $t$.  %If $\gamma$ is simple,
%then it is non-crossing, but there are non-simple, non-crossing
%curves.
Then $g_t$ satisfies the
  {\em  (chordal) Loewner equation}
\begin{equation}  \label{loew}
       \pa_t g_t(z) = \frac{2}{g_t(z) - V_t} , \quad
   g_0(z) = z,
\end{equation}
where $V_t: = \lim_{\HH\sem K_t\ni z\to \gamma(t)} g_t(z)$ is a continuous real-valued function.

Conversely, one can start with a continuous real-valued function $V_t$  and define $g_t$ by \eqref{loew}.
For $z\in \Half \setminus \{0\}$, the function $t \mapsto g_t(z)$ is
well defined up to a blowup time $T_z$, which could be $\infty$.
The evolution then generates an increasing family of $\HH$-hulls defined by
$$K_t=\{z\in\HH:T_z>t\},\quad 0\le t<\infty,$$
with $g_t=g_{K_t}$ and $\hcap(K_t)=2t$ for each $t$. One may not always get a curve from the evolution.

The {\em (chordal) Schramm-Loewner evolution ($SLE_\kappa$) (from $0$
to $\infty$ in $\Half$)} is the solution to \eqref{loew}
where  $V_t = \sqrt{\kappa} B_t$, where $\kappa>0$ and $B(t)$ is a standard Brownian motion.
It is shown in \cite{RS,LSW} that the limits
$$\gamma(t)=\lim_{\HH\ni z\to V_t} g_t^{-1}(z),\quad 0\le t<\infty,$$
exist, and give a continuous curve $\gamma$ in $\lin\HH$ with $\gamma(0)=0$ and $\lim_{t\to\infty}\gamma(t)=\infty$. Only in the case $\kappa\le 4$, the curve is simple and stays in $\HH$ for $t>0$, and we recover the previous picture. For other cases, $\gamma$ is not simple, and $H_t:=\HH\sem K_t$ is the unbounded component of $\HH\sem \gamma(0,t]$.

%We write \[    f_t(z) = g_t^{-1}(z + V_t). \]

We can define $SLE_\kappa$ in other simply connected domains using conformal maps. Roughly speaking, $SLE_\kappa$ in a simply connected domain $D\subsetneqq\C$ is the image of the above $\gamma$ under a conformal map $F$ from $\HH$ onto $D$. However, since $\gamma$ in fact lies in $\lin\HH$ instead of $\HH$, the rigorous definition requires some regularity of $D$. For simplicity, we assume that $\pa D$ is locally connected and call such domain $D$ regular. This ensures that any conformal map $F$ from $\HH$ onto $D$ has a continuous extension to $\lin\HH$, and so $F\circ \gamma$ is a continuous curve in $\lin D$.

\begin{comment}
We also need to specify the initial and terminal ``points'' of $SLE$ in $D$. This requires the notation of prime ends. The reader may refer to \cite{Pom-bond} for the definition. Here we give a self-contained definition. Suppose that $D\subsetneqq\C$ is a simply connected domain. Every prime end $w$ of $D$ can be formally expressed as $f^{-1}(z)$, where $f$ is a conformal map from $D$ onto $\D:=\{|z|<1\}$, and $z\in\pa\D$; and two prime ends $f_1^{-1}(z_1)$ and $f_2^{-1}(z_2)$ are identified as the same if the continuous extension of $f_2\circ f_1^{-1}$ from $\D$ to $\lin\D$ maps $z_1$ to $z_2$. It is clear that if $f$ maps $D_1$ conformally onto $D_2$, then $f$ induces a bijection between the set of prime ends of $D_1$ and the set of prime ends of $D_2$. An open set $U\subset D$ is called a neighborhood of a prime end $w$ if $f(U)$ is a neighborhood of $f(w)\in\pa \D$ in $\D$. We say that $z\in D$ tends to a prime end $w$ if $f(z)\in\D$ tends to $f(w)\in\pa \D$. Every $\HH$-domain $D$ has a special prime end, denoted by $\infty$, such that $D\ni z\to\infty$ if and only if $|z|\to \infty$. We say that a prime end $w$ of $D$ determines a point $\til w\in\pa D$ if $D\ni z\to w$ implies that $D\ni z\to \til w$, and vice versa. If a prime end $w$ and a boundary point $\til w$ determine each other, then we may treat them as the same.
\end{comment}

Now we state the definition. Let $D$ be a regular simply connected domain, and $w_0,w_\infty$ be distinct prime ends (c.f.\ \cite{Law1}) of $D$. Let $F: \Half
\rightarrow D$ be a conformal transformation of $\Half$ onto $D$ with
$F(0) = w_0, F(\infty) = w_\infty$.  Then
$  \tilde \gamma:= F \circ \gamma$ is called an $SLE_\kappa$ curve in $D$ from $w_0$ to $w_\infty$.
Although such $F$ is not unique, the definition is unique up to a linear time change.

Now we state the important {\em Domain Markov Property} (DMP) of $SLE$. Let $D$ be a regular simply connected domain with prime ends $w_0\ne w_\infty$, and $\gamma$ an $SLE_\kappa$ curve in $D$ from $w_0$ to $w_\infty$. For each $t_0\ge 0$, let $D_{t_0}$ be the connected component of $\HH\sem \gamma(0,t_0]$ which is a neighborhood of $w_\infty$ in $D$, and $\gamma^{t_0}(t)=\gamma(t_0+t)$, $0\le t<\infty$. Let $T$ be any stopping time w.r.t.\ $\gamma$. Then conditioned on $\gamma(0,T]$ and the event $\{T<\infty\}$, a.s.\ $\gamma(T)\in\pa D_T$ determines a prime end of $D_T$, and $\gamma^T$ has the distribution of $SLE_\kappa$ in $D_T$ from (the prime end determined by) $\gamma(T)$ to $w_\infty$.

\subsection{Crosscuts}
Let $D$ be a simply connected domain. A simple curve $\rho:(a,b)\to D$ is called a crosscut in $D$ if $\lim_{t\to a^+}\rho(t)$ and $\lim_{t\to b^-}\rho(t)$ both exist and lie on $\pa D$. We emphasize that by definition the end points of $\rho$ do not belong to $\rho$, and so $\rho$ completely lies in $D$. It is well known (c.f.\ \cite{Pom-bond}) that as $t\to a^+$ or $t\to b^-$, $\rho(t)$ tends to a prime end of $D$. We say that these two prime ends are determined by $\rho$. Thus, if $f$ maps $D$ conformally onto $\D$, then $f(\rho)$ is a crosscut in $\D$. So we see that $D\sem\rho$ has exactly two connected components.

For the ease of labeling the two components of $D\sem \rho$, we introduce the following symbols. Let $K$ be any subset of $\C$ such that $K\cap D$ is a relatively closed subset of $D$, and let $S$ be a connected subset of $D\sem K$. We use $D(K;S)$ to denote the connected component of $D\sem K$ which is a neighborhood of $S$ in $D$; and let $D^*(K;S)=D\sem (K\cup D(K;S))$, which is the union of components of $D\sem K$ other than $D(K;S)$. For example, $D(K;z_1)\ne D(K;z_2)$ means that $z_1$ and $z_2$ are separated in $D$ by $K$. If $\rho$ and $\eta$ are disjoint crosscuts in $D$. Then $D\sem\rho=D(\rho;\eta)\cup D^*(\rho;\eta)$ and $D\sem\eta=D(\eta;\rho)\cup D^*(\eta;\rho)$; and we have $D^*(\rho;\eta)\subset D(\eta;\rho)$ and $D^*(\eta;\rho)\subset D(\rho;\eta)$.

The symbols $D(K;S)$ and $D^*(K;S)$ also make sense if $S$ is a prime end of $D$ such that $D\sem K$ is a neighborhood of $S$ in $D$. If $D$ is an $\HH$-domain, and $S$ is the prime end $\infty$, then we omit the $\infty$ in $D(K;\infty)$ and $D^*(K;\infty)$. For example, for the $SLE_\kappa$ curve $\gamma$ in $\HH$ from $0$ to $\infty$, the corresponding $\HH$-hull $K_t$ satisfies that $\HH\sem K_t=\HH(\gamma(0,t])$.

\begin{figure}

\labellist
\small
\pinlabel $Z_2$ at 210 18
\pinlabel $Z_1$ at 405 215
\pinlabel $\lambda_3$ at 320 40
\pinlabel $\lambda_2$ at 320 100
\pinlabel $\lambda_1$ at 320 150
\pinlabel $\rho$ at 305 245
\endlabellist

   \centering
	\includegraphics[width=3in]{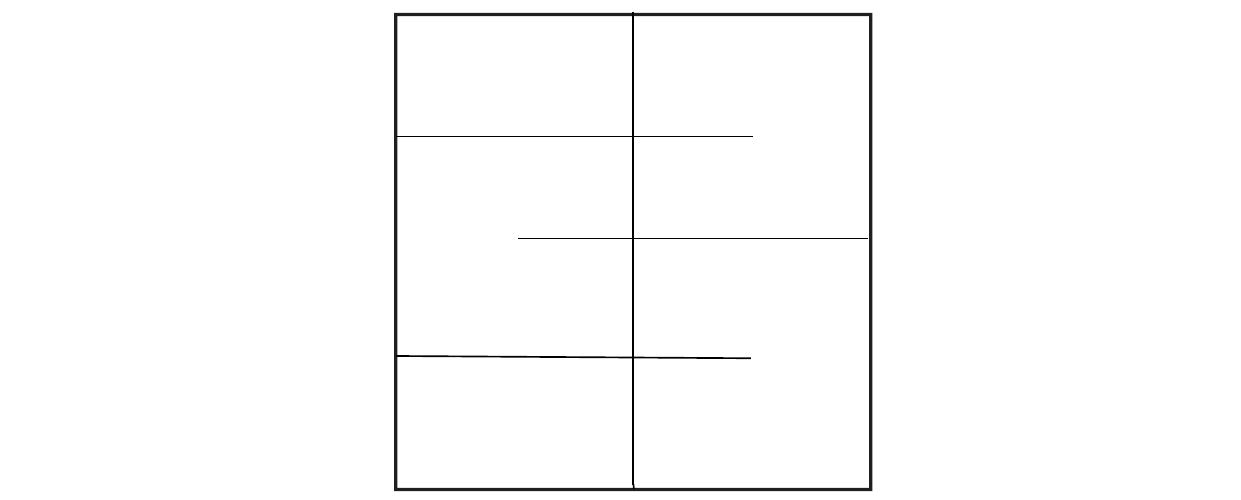}
	\caption{ This figure illustrates the situation of Lemma \ref{first-crosscut}. Here $\til D$ is the square and $D$ is the comb domain. The $\lambda_1$, $\lambda_2$, $\lambda_3$ are sub-crosscuts of $\rho$ in $D$ that separate $Z_1$ from $Z_2$. Among them $\lambda_1$ is the crosscut given by the lemma. }
\end{figure}

\begin{Lemma}
Let $D\subset \til D$ be two simply connected domains. Let $\rho$ be a Jordan curve in $\til D$, which intersects $\pa D$, or a crosscut in $\til D$. Let $Z_1$ and $Z_2$ be two connected subsets or prime ends of $\til D$ such that $\til D(\rho;Z_j)$, $j=1,2$, are well defined and not equal. In other words, $\til D\sem \rho$ is a neighborhood of both $Z_1$ and $Z_2$ in $D$, and $Z_1$ is disconnected from $Z_2$ in $\til D$ by $\rho$. Suppose $D$ is a neighborhood of both $Z_1$ and $Z_2$ in $\til D$.
Let $\Lambda$ denote the set of connected components of $\rho\cap D$. Then there is a unique $\lambda_1\in\Lambda$ such that $D(\lambda_1;Z_1)\ne D(\lambda_1;Z_2)$, and if $D(\lambda;Z_1)\ne D(\lambda;Z_2)$ for some $\lambda\in \Lambda$, then $D(\lambda_1;Z_1)\subset D(\lambda;Z_1)$ and $D(\lambda_1;Z_2)\supset D(\lambda;Z_2)$. \label{first-crosscut}
\end{Lemma}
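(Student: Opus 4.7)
My plan is to identify $\lambda_1$ as the component of $\rho\cap D$ through which a continuous path in $D$ from $Z_1$ to $Z_2$ makes its \emph{last} exit from the component of $D\setminus\rho$ that contains $Z_1$. First I would let $W_j$ denote the connected component of $D\setminus\rho$ that is a neighborhood of $Z_j$ in $\til D$; this is well-defined because $D$ is a neighborhood of $Z_j$ and $\rho$ misses $Z_j$. One checks $W_1\ne W_2$ immediately: otherwise a path inside the common component would connect neighborhoods of $Z_1$ and $Z_2$ within $\til D\setminus\rho$, contradicting $\til D(\rho;Z_1)\ne\til D(\rho;Z_2)$. Then I would pick a continuous $\alpha:[0,1]\to D$ running from a point near $Z_1$ to a point near $Z_2$ and set $t^*=\sup\{t:\alpha(t)\in W_1\}$; since $W_1$ is open in $D$ and $\alpha(1)\notin W_1$, one has $\alpha(t^*)\in\partial W_1\cap D\subset\rho\cap D$, and I let $\lambda_1\in\Lambda$ be the component of $\rho\cap D$ containing $\alpha(t^*)$. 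A short local connectivity argument shows $\lambda_1\subset\partial W_1\cap D$.

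Next I would show $\lambda_1$ separates $Z_1$ from $Z_2$ in $D$. The fact that $\lambda_1$ is a crosscut of $D$ with $D\setminus\lambda_1$ having exactly two connected components is standard once one verifies its endpoints (as a sub-arc of $\rho$) accumulate on $\partial D$ and determine prime ends of $D$, after which Pommerenke's theorem on crosscuts in simply connected domains applies. To see that $W_1$ and $W_2$ lie in different components of $D\setminus\lambda_1$: if $\alpha$ were to re-cross $\lambda_1$ from the far side back to the side containing $W_1$ at some $t'>t^*$, then locally on the $W_1$-side of $\lambda_1$ near a point of $\lambda_1\subset\partial W_1$ the unique component of $D\setminus\rho$ that is met is $W_1$ itself, so $\alpha(t)\in W_1$ for $t$ slightly greater than $t'$, contradicting the definition of $t^*$. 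Hence $\alpha([t^*,1])$, and in particular the endpoint in $W_2$, stays on the side of $\lambda_1$ opposite $W_1$, so $D(\lambda_1;Z_1)\ne D(\lambda_1;Z_2)$.

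For the minimality and uniqueness clause, suppose $\lambda\in\Lambda$ with $\lambda\ne\lambda_1$ also separates $Z_1$ from $Z_2$ in $D$. Then $\lambda$ and $\lambda_1$ are disjoint crosscuts of $D$, partitioning $D\setminus(\lambda\cup\lambda_1)$ into three open pieces $P_1\sqcup P_2\sqcup P_3$ with $\lambda_1$ between $P_1,P_2$ and $\lambda$ between $P_2,P_3$. I would first verify that the only component of $\rho\cap D$ which is both contained in $\partial W_1\cap D$ and separates $Z_1$ from $Z_2$ is $\lambda_1$: indeed, if two distinct such chords existed, the three-piece analysis above forces $W_1\subset P_2$ and simultaneously $Z_2\in P_1$ and $Z_2\in P_3$, a contradiction. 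Therefore $\lambda$ is not adjacent to $W_1$; combining with $\lambda_1\subset\partial W_1$ gives $W_1\subset P_1$ and $Z_2\in P_3$, so $\lambda_1\subset D(\lambda;Z_1)$ and $\lambda\subset D(\lambda_1;Z_2)$. This yields $D(\lambda_1;Z_1)=P_1\subset P_1\cup\lambda_1\cup P_2=D(\lambda;Z_1)$ and $D(\lambda_1;Z_2)=P_2\cup\lambda\cup P_3\supset P_3=D(\lambda;Z_2)$. Uniqueness is then immediate: any $\lambda_1'\in\Lambda$ satisfying the same property would in particular also be an adjacent-to-$W_1$ separator, forcing $\lambda_1'=\lambda_1$.

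The main obstacle I anticipate is precisely the global separation step in the second paragraph: verifying that a single re-crossing of $\lambda_1$ really forces entry into $W_1$, which rests on the local fact that exactly one component of $D\setminus\rho$ meets each local side of the arc $\lambda_1\subset\rho$. Morally this is the statement that the dual graph whose vertices are the components of $D\setminus\rho$ and whose edges are the elements of $\Lambda$ is a tree (a consequence of $\rho$ being a simple arc in $\til D$ and of $D$ being simply connected), and $\lambda_1$ is the first edge on the unique tree-path from $W_1$ to $W_2$. Making this intuition rigorous without explicit appeal to the tree—i.e.\ carrying out the crossing and three-piece arguments above—is where the real labor of the proof lies.
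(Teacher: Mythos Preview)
Your approach is essentially correct but takes a genuinely different route from the paper. You construct $\lambda_1$ explicitly as the component of $\rho\cap D$ through which a chosen path $\alpha$ makes its last exit from $W_1$, and then carry out a local crossing analysis (near points of $\lambda_1$, the $W_1$-side of $D\setminus\lambda_1$ locally coincides with $W_1$ as a component of $D\setminus\rho$) to show $\lambda_1$ separates, followed by a three-piece disjoint-crosscut argument for minimality and uniqueness. The paper instead works abstractly with the set $\Lambda_0=\{\lambda\in\Lambda:D(\lambda;Z_1)\ne D(\lambda;Z_2)\}$ and never identifies $\lambda_1$ directly: it first shows $\Lambda_0$ is \emph{finite} by a compactness argument (any path from $Z_1$ to $Z_2$ meets only finitely many $\lambda\in\Lambda$, and must meet every $\lambda\in\Lambda_0$), then shows $\Lambda_0$ is \emph{nonempty} by choosing a path $\gamma$ minimizing the set of $\lambda$'s it meets and arguing that any non-separating $\lambda_0$ hit by $\gamma$ could be bypassed, contradicting minimality. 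Finally it observes that the sets $D(\lambda;Z_1)$ for $\lambda\in\Lambda_0$ are totally ordered by inclusion, so a finite nonempty totally ordered set has a minimum. The paper's argument is slicker precisely because it sidesteps the local side-identification and re-crossing analysis that you correctly flag as the main obstacle; your approach, on the other hand, is more constructive and makes the tree picture you allude to concrete, at the cost of the extra topology. One small point to tighten in your write-up: when you argue that a re-crossing of $\lambda_1$ at $t'>t^*$ forces $\alpha(t)\in W_1$ just after, you should make explicit that near any interior point of $\lambda_1$ there is a disk meeting $\rho$ only in $\lambda_1$ (since $\lambda_1$ is a connected component of the relatively closed set $\rho\cap D$), so the local $S_1$-side really is $W_1$ there.
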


\no{\bf Remark.}
Every $\lambda\in \Lambda$ is a crosscut in $D$. We call the $\lambda_1$ given by the lemma the first sub-crosscut of $\rho$ in $D$ that disconnects $Z_1$ from $Z_2$.
%    \item If $Z_j$ is a prime end of $\til D$, then it is also a prime end of $D$.
%  \end{enumerate}

\begin{proof}
  Let $\Lambda_0=\{ \lambda\in  \Lambda: D(\lambda;Z_1)\ne D(\lambda;Z_2)\}$. We first show that $\Lambda_0$ is finite. Let $\gamma$ be any curve in $D$ connecting $Z_1$ with $Z_2$. Since $\gamma\cap\rho$ is a compact subset of $\bigcup_{\lambda\in\Lambda} \lambda$, and every $\lambda\in\Lambda$ is a relatively open subset of $\rho$, we see that $\gamma$ intersects finitely many $\lambda\in\Lambda$. From the definition of $\Lambda_0$, $\gamma$ intersects every $\lambda\in\Lambda_0$.  Thus, $\Lambda_0$ is finite. We emphasize here that the above argument does not exclude the possibility that $\Lambda_0$ is empty.

  Next, we show that $\Lambda_0$ is nonempty. We choose $\gamma$ such that it minimizes the size of the set  $\Lambda(\gamma):=\{\lambda\in\Lambda:\gamma\cap\lambda\ne\emptyset\}$, which can not be empty since $\bigcup_{\lambda\in\Lambda} \lambda=\rho\cap D$ disconnects $Z_1$ from $Z_2$ in $D$. Let $\lambda_0\in \Lambda(\gamma)$. Let $w_1$ and $w_2$ be the first point and the last point on $\gamma$, which lies on $\lambda_0$, respectively.  Let $\lambda_0'$ be the sub curve of $\lambda_0$ with end points $w_1$ and $w_2$. There is $\eps>0$ such that $\dist(\lambda_0',\lambda)>\eps$ for any $\lambda\in \Lambda\sem\{\lambda_0\}$. Suppose  $\lambda_0\not\in\Lambda_0$. Then $D(\lambda;Z_1)= D(\lambda;Z_2)$. We may choose for $j=1,2$, $w_j'$ on the part of $\gamma$ between $Z_j$ and $w_j$, which is very close to $w_j$, such that there is a curve $\gamma_\eps$ connecting $w_1'$ and $w_2'$ in $D(\lambda_0;Z_1)$, which stays in the $\eps$-neighborhood of $\lambda_0'$. Construct a new curve $\gamma'$ in $D$ connecting $Z_1$ and $Z_2$ by modifying $\gamma$ such that the part of $\gamma$ between $w_1'$ and $w_2'$ is replaced by $\gamma_\eps$. Then we find that $\Lambda(\gamma')=\Lambda(\gamma)\sem\{\lambda_0\}$, which contradicts the assumption on $\gamma$. Thus, $\Lambda_0\supset \Lambda(\gamma)$ is nonempty.

Finally, we need to show that there is $\lambda_1\in\Lambda_0$, which minimizes $\{D(\lambda;Z_1):\lambda\in\Lambda_0\}$ and maximizes $\{D(\lambda;Z_2):\lambda\in\Lambda_0\}$. This follows from the finiteness and nonemptyness of $\Lambda_0$ and the facts that for any $\lambda_1,\lambda_2\in\Lambda_0$,  one of $D(\lambda_1;Z_1)$ and $D(\lambda_2;Z_1)$ is a subset of the other, and the inclusion relation is reversed if $Z_1$ is replaced by $Z_2$.
\end{proof}

\begin{figure}
	
\labellist
\small
\pinlabel $\gamma$ at 460 160
\pinlabel $w_1$ at 300 70
\pinlabel $w_0$ at 300 125
\pinlabel $t_0$ at 375 165
\pinlabel $\rho_{t_0}$ at 260 165
\pinlabel $\rho_{t_0^-}$ at 170 70
\endlabellist	
		\centering
	\includegraphics[width=3in]{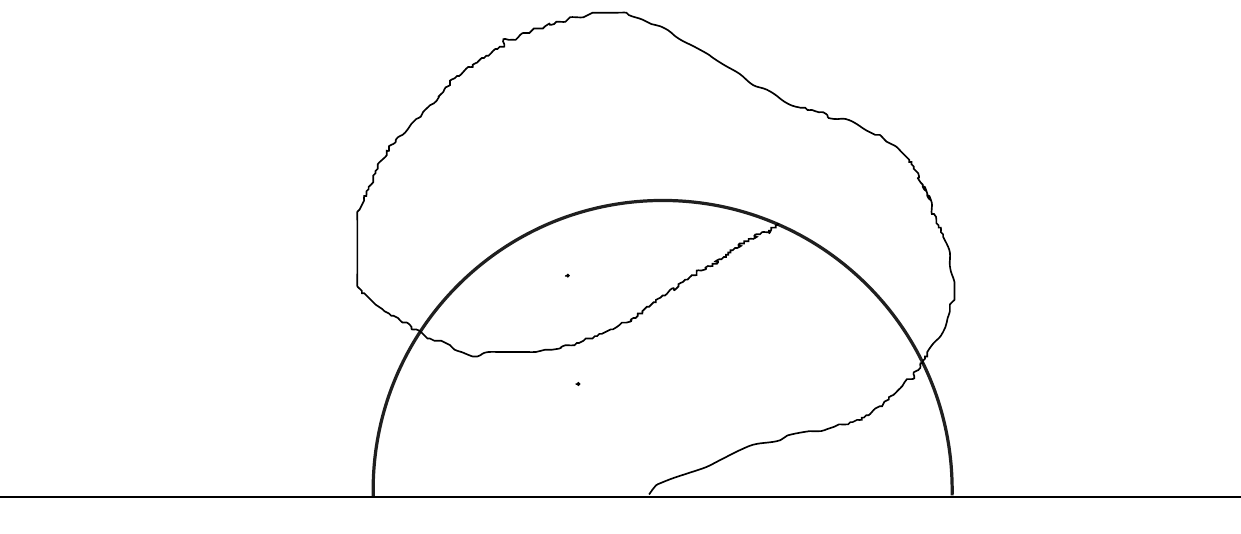}
	\caption{This figure illustrates the situation of Lemma \ref{right-ctns-B}. Here $D$ is the upper half plane, $\rho$ is the semi-circle, and $w_\infty$ is the prime end $\infty$. The curve $\gamma$ is shown up to some time $t_0$. Since $\rho_{t_0}$ does not disconnect $w_1$ from $\infty$, we have $f(t_0)=1$. When $t<t_0$ and is close to $t_0$, $\rho_t$ is the $\rho_{t_0^-}$ in the figure, which disconnects $w_1$ from $\infty$. This means that $f(t_0^-)=0$, and $f$ is not left-continuous at $t_0$. }
\end{figure}

\begin{Lemma}
  Let $D$ be a simply connected domain and $\rho$ a crosscut in $D$.  Let $w_0$, $w_1$ and $w_\infty$ be connected subsets or prime ends of $D$ such that $D\sem \rho$ is a neighborhood of all of them in $D$. Suppose that $\rho$ disconnects $w_0$ from $w_\infty$ in $D$. Let $\gamma(t)$, $0\le t<T$, be a continuous curve in $\lin D$ with $\gamma(0)\in\pa D$. Suppose for $0\le t<T$, $D\sem\gamma[0,t]$ is a neighborhood of $w_0$, $w_1$ and $w_\infty$ in $D$, and $w_0, w_1\subset D_t:=D(\gamma[0,t];w_\infty)$. For $0\le t<T$, let $\rho_t$ be the first sub-crosscut of $\rho$ in $D_t$ that disconnects $w_0$ from $w_\infty$ as given by Lemma \ref{first-crosscut}. For $0\le t<T$, let $f(t)=1$ if $w_1\in D_t(\rho_t;w_\infty)$; $=0$ if $w_1\in D_t^*(\rho_t;w_\infty)$. Then $f$ is right-continuous on $[0,T)$, and left-continuous at those $t_0\in(0,T)$ such that $\gamma(t_0)$ is not an end point of $\rho_{t_0}$.
\end{Lemma}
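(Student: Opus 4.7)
My plan is to handle right-continuity and left-continuity separately. In each case the strategy is to produce a compact witness curve $\alpha$ inside the relevant component of $D_{t_0}\sem\rho_{t_0}$ and to argue that $\alpha$ persists as a witness in $D_t\sem\rho_t$ for all $t$ sufficiently close to $t_0$. For right-continuity at $t_0$, assume $f(t_0)=1$; the case $f(t_0)=0$ is symmetric. Fix a compact curve $\alpha\subset D_{t_0}(\rho_{t_0};w_\infty)$ joining a point near $w_1$ to a point near $w_\infty$. Since $\alpha$ is compact in $D_{t_0}$ and $\gamma(t_0)\notin D_{t_0}$, continuity of $\gamma$ provides $\eps>0$ such that $\alpha\cap\gamma[t_0,t]=\emptyset$ for $t\in[t_0,t_0+\eps)$, and hence $\alpha\subset D_t$. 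The core substep is to show $\rho_t\subset\rho_{t_0}$. Any component of $\rho\cap D_t$ lies in a unique component $\lambda$ of $\rho\cap D_{t_0}$; suppose $\rho_t\subset\lambda\ne\rho_{t_0}$. If $\lambda$ disconnects $w_0$ from $w_\infty$ in $D_{t_0}$, then Lemma~\ref{first-crosscut} forces $\lambda\subset D_{t_0}(\rho_{t_0};w_\infty)$, and taking $\eps$ so small that $\rho_{t_0}\cap D_t$ still disconnects $w_0$ from $w_\infty$ in $D_t$ contradicts the minimality of $\rho_t$ in $D_t$. If on the other hand $\lambda$ does not disconnect $w_0$ from $w_\infty$ in $D_{t_0}$, a connecting curve in $D_{t_0}\sem\lambda$ can be chosen at positive distance from $\gamma(t_0)$, so it survives in $D_t$, contradicting that $\rho_t\subset\lambda$ disconnects in $D_t$. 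Hence $\rho_t\subset\rho_{t_0}$, $\alpha\cap\rho_t=\emptyset$, and $f(t)=1$.

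For left-continuity at $t_0$ with $\gamma(t_0)$ not an endpoint of $\rho_{t_0}$, use $D_t\supset D_{t_0}$ for $t<t_0$. The endpoints of $\rho_{t_0}$ as prime ends of $D_{t_0}$ lie either on $\partial D$ or at $\gamma(s)$ for some $s<t_0$ (strictly); for $t<t_0$ close to $t_0$ they continue to be prime ends of $D_t$ in the same manner, so $\rho_{t_0}$ is itself a sub-crosscut of $\rho$ in $D_t$ that disconnects $w_0$ from $w_\infty$. The same minimality and connectivity arguments used above, now applied in $D_t$, then force $\rho_t\subset\rho_{t_0}$, and the witness curve $\alpha$ (contained in $D_{t_0}\subset D_t$) reads off $f(t)=f(t_0)$.

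The main obstacle is ensuring the key set inclusion $\rho_t\subset\rho_{t_0}$. For right-continuity this requires controlling how $\gamma[t_0,t]$ interacts with $\lin\rho_{t_0}$, allowing for the possibility that $\gamma(t_0)$ lies on $\lin\rho_{t_0}$, in which case $\rho_{t_0}\cap D_t$ is only a truncation of $\rho_{t_0}$ that nonetheless still disconnects for small $\eps$. For left-continuity this is precisely where the endpoint hypothesis enters: if $\gamma(t_0)$ were an endpoint of $\rho_{t_0}$, then $\rho_{t_0}$ could merge in $D_t$ with a neighboring component of $\rho\cap D_{t_0}$ across $D_t\sem D_{t_0}$ into a strictly larger sub-crosscut that wraps around $w_1$ on the opposite side, flipping $f$. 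To make the perturbation analysis fully rigorous I plan to transport to $\HH$ via a conformal map from $D_{t_0}$ sending $w_\infty$ to $\infty$, express $D_t$ as $\HH$ minus a small hull near the image of $\gamma(t_0)\in\R$, and use Carath\'eodory kernel convergence to track the images of the sub-crosscuts of $\rho$.
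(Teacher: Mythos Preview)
Your overall strategy---use a compact witness curve joining $w_1$ to $w_\infty$ (or $w_0$) and show it persists in $D_t$ for nearby $t$---is exactly the paper's. Your curve $\alpha$ is the paper's $\beta_1$. The difference lies in how you establish the key inclusion $\rho_t\subset\rho_{t_0}$, and here your argument has a genuine gap.

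In your Case~1 for right-continuity ($\lambda$ disconnects $w_0$ from $w_\infty$ in $D_{t_0}$, $\lambda\ne\rho_{t_0}$), you correctly note $\lambda\subset D_{t_0}(\rho_{t_0};w_\infty)$ and that some component $\mu$ of $\rho_{t_0}\cap D_t$ disconnects in $D_t$. But ``contradicts the minimality of $\rho_t$'' is not justified: minimality gives $D_t(\rho_t;w_0)\subset D_t(\mu;w_0)$, whereas you need the reverse inclusion, i.e.\ that $\mu$ lies on the $w_0$-side of $\rho_t$ in $D_t$. A curve from $w_0$ to $\rho_t$ in $D_t$ must cross $\rho_{t_0}\cap D_t$, but not necessarily $\mu$ itself---it could cross another component. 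Your proposed repair via conformal transport and Carath\'eodory convergence would be heavy machinery for what is an elementary topological fact.

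The paper sidesteps the case analysis entirely by introducing a \emph{second} witness curve $\beta_0$: a curve in $D_{t_0}$ from $w_0$ to $w_\infty$ that crosses $\rho_{t_0}$ exactly once, at a point $q$, and does not meet $\rho\setminus\rho_{t_0}$ before $q$. Such a $\beta_0$ exists by the definition of $\rho_{t_0}$ as the first sub-crosscut. Being compact in $D_{t_0}$, $\beta_0$ survives in $D_t$ for $t$ close to $t_0$, and then directly identifies $\rho_t$ as the component of $\rho_{t_0}\cap D_t$ containing $q$: since $\beta_0$ crosses $\rho_{t_0}$ only at $q$, that component is the unique sub-crosscut of $\rho_{t_0}$ in $D_t$ that $\beta_0$ meets, hence the one that disconnects; and since $\beta_0$ meets no other part of $\rho$ before $q$, that component is also the first sub-crosscut of $\rho$ in $D_t$. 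For left-continuity the same $\beta_0$ gives $\rho_t=\rho_{t_0}$ outright. This single extra curve replaces your case analysis and your conformal-map plan.
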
 \label{right-ctns-B}
\no{\bf Remark.} It is easy to see that $(D_t)_{0\le t<T}$ is a decreasing family of $\HH$-domains. But $(\rho_t)_{0\le t<T}$ may not be a decreasing family.

\begin{proof}
  We first show that $f$ is right-continuous. Fix $t_0\in[0,T)$. From the definition of $\rho_{t_0}$, there exist a curve $\beta_0$ in $D_{t_0}$, which goes from $w_0$ to $w_\infty$, crosses $\rho_{t_0}$ for only once, and does not visit $\rho\sem\rho_{t_0}$ before $\rho_{t_0}$. Let $S=w_\infty$ or $w_0$ depending on whether $f(t_0)=1$ or $0$. Then there is a curve $\beta_1$ in $D_{t_0}\sem \rho_{t_0}$ that connects $w_1$ with $S$. Since $\gamma(t_0)\not\in D_{t_0}$ and $\gamma$ is continuous, there is $t_1\in(t_0,T)$ such that $\gamma[t_0,t_1)$ is disjoint from $\beta_0$ and $\beta_1$. Fix $t\in(t_0,t_1)$. Then $\beta_0,\beta_1\subset D_t$. From Lemma \ref{first-crosscut}, there is the first sub-crosscut of $\rho_{t_0}$, denoted by $\rho_{t_0,t}$ in $D_t$ that disconnects $w_0$ from $w_\infty$. From the properties of $\beta_0$, $\rho_{t_0,t}$ is the connected component of $\rho_{t_0}\cap D_t$ that contains $\beta_0\cap\rho_{t_0}$. Since $\beta_0$ does not intersect $\rho$ before $\beta_0\cap\rho_{t_0}$, we have $\rho_t=\rho_{t_0,t}\subset \rho_{t_0}$. Thus, $\beta_1$ is a curve in $D_t\sem\rho_t$ connecting $w_1$ with $S$, which implies that $f$ is constant on $[t_0,t_1)$.

   Suppose $\gamma(t_0)$ is not an end point of $\rho_{t_0}$ for some $t_0\in (0,T)$. We now show that $f$ is left-continuous at $t_0$. There exists $t_1\in[0,t_0)$ such that $\gamma(t_1,t_0]$ does not intersect $\rho_{t_0}$. Fix $t\in(t_1,t_0]$. Then $\rho_{t_0}$ is a crosscut in $D_t$. Let $\beta_0,S,\beta_1$ be as above. Then $\beta_0$ and $\beta_1$ are also curves in $D_t$. From the properties of $\beta_0$, we see that $\rho_t=\rho_{t_0}$. Thus, $\beta_1$ is a curve in $D_t\sem\rho_t$ connecting $w_1$ with $S$, which implies that $f$ is constant on $(t_1,t_0]$.
\end{proof}

\subsection{Estimates}
We give some important estimates for SLE in this subsection. The first one is the interior estimate. To begin with, we quote the following theorem proved in \cite{Bf}.

\begin{Theorem} \label{oneptthm}
Suppose $\gamma$ is an $SLE_\kappa$ curve from $w_1$ to $w_2$ in a simply connected domain $D$. If $z \in D$, then
\[
\PP[ \dist (\gamma,z) \leq r]\leq C G_{(D;w_1,w_2)}(z) r^{2-d},
\]
where $G_{(D;w_1,w_2)}$ is the $1$-point Green's function for the $\gamma$.
\end{Theorem}

A stronger estimate is obtained in \cite{LR}: $\PP[ \dist (\gamma,z)\le r]=r^{2-d}G_{(D;w_1,w_2)}(z)[1+o(r^\alpha)]$, $\alpha>0$.
Using (\ref{green}), (\ref{green-D}) and Koebe's $1/4$ theorem, we find that $G_{(D;w_1,w_2)}(z)\le C \dist(z,\pa D)^{d-2}$. So we have the following interior estimate which is a corollary of Theorem \ref{oneptthm}.

\begin{Lemma} {\bf [Interior estimate]}
  For any $z\in D$,
  $$\PP[ \dist (\gamma,z) \leq r] \le C\Big(\frac{r}{\dist(z,\pa D)}\Big)^{2-d}.$$
\end{Lemma}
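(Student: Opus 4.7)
The plan is to split on the ratio $r/\dist(z,\pa D)$. In the easy regime $\dist(z,\pa D)<2r$, the ratio $r/\dist(z,\pa D)>1/2$, so the bound is trivial: $\PP[\dist(\gamma,z)\le r]\le 1\le 2^{2-d}(r/\dist(z,\pa D))^{2-d}$. All the content lies in the regime $\dist(z,\pa D)\ge 2r$, where Theorem \ref{oneptthm} is directly applicable.

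In that regime I would feed Theorem \ref{oneptthm} into the Green's function bound derived in the paragraph just before the lemma. Concretely, since $\kappa<8$ gives $8/\kappa-1>0$, the explicit formula (\ref{green}) yields $G_{(\HH;0,\infty)}(w)\le C\,\Imm(w)^{d-2}$ because $\sin^{8/\kappa-1}(\arg w)\le 1$ for $w\in\HH$. Transporting this via the conformal covariance rule (\ref{green-D}) with $F:\HH\to D$ taking $0,\infty$ to $w_1,w_2$, and writing $\tilde z=F^{-1}(z)$, gives
\[
G_{(D;w_1,w_2)}(z)=|F'(\tilde z)|^{d-2}\,G_{(\HH;0,\infty)}(\tilde z)\le C\,\bigl(|F'(\tilde z)|\,\Imm\tilde z\bigr)^{d-2}.
\]
Koebe's $1/4$ theorem applied to $F$ on $\HH$ says $|F'(\tilde z)|\,\Imm\tilde z\ge \tfrac14\dist(z,\pa D)$, and since $d-2<0$ this turns the preceding display into $G_{(D;w_1,w_2)}(z)\le C\dist(z,\pa D)^{d-2}$. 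Plugging into Theorem \ref{oneptthm},
\[
\PP[\dist(\gamma,z)\le r]\le C\dist(z,\pa D)^{d-2} r^{2-d}\bigl[1+O(r^{\alpha})\bigr]=C\Bigl(\frac{r}{\dist(z,\pa D)}\Bigr)^{2-d}\bigl[1+O(r^{\alpha})\bigr],
\]
which is the desired inequality up to the error factor.

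The only subtlety is making the $[1+O(r^{\alpha})]$ factor uniform, since Theorem \ref{oneptthm} is stated in an $r\to 0$ asymptotic form. I would remove this by scaling: multiplying the domain configuration $(D;w_1,w_2,z)$ by $\lambda>0$ multiplies both $r$ and $\dist(z,\pa D)$ by $\lambda$, leaves the ratio $r/\dist(z,\pa D)$ invariant, and (by the conformal invariance of $SLE_\kappa$) leaves $\PP[\dist(\gamma,z)\le r]$ invariant. So I rescale to $\dist(z,\pa D)=1$; then in the non-trivial regime $r\le 1/2$, the $O(r^{\alpha})$ term is uniformly bounded by a $\kappa$-dependent constant, which we absorb into $C$.

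There is essentially no obstacle — the lemma is a packaging of Theorem \ref{oneptthm}, the explicit formula (\ref{green}), the covariance rule (\ref{green-D}), and Koebe's theorem, all of which are already in place in the paper. The only thing that requires a word is the scaling reduction for the error term.
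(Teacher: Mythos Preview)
Your proof is correct and follows exactly the route the paper takes: bound $G_{(D;w_1,w_2)}(z)$ via the explicit formula (\ref{green}), the covariance rule (\ref{green-D}), and Koebe's $1/4$ theorem, then feed into Theorem \ref{oneptthm}. You are simply more explicit than the paper about the trivial regime $\dist(z,\pa D)<2r$ and about absorbing the $[1+O(r^{\alpha})]$ factor via scale invariance, both of which the paper leaves implicit.
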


%We now quote the following theorem proved in \cite{Albert-Kozdron}, which gives an upper bound of the probability that an $SLE$ curve gets close to a boundary point.

We will state the boundary estimate for SLE in several different forms. The original one comes from   \cite{Albert-Kozdron}, which is the following theorem.

\begin{Theorem} {\bf [Boundary estimate v.0]}
Let $\gamma$ be an $SLE_\kappa$ curve in $\HH$ from $0$ to $\infty$. Then for any $x_0\in\R\sem\{0\}$ and $r>0$,
$$\PP[ \dist (\gamma,x_0) \leq r] \le C\Big(\frac{r}{|x_0|}\Big)^{\alpha}.$$ \label{A-K}
\end{Theorem}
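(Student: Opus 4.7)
The plan is a two-step reduction via the Loewner evolution at $x_0$. By Brownian scaling it suffices to treat $x_0=1$. Let $V_t=\sqrt{\kappa}B_t$ and, for $t$ less than the swallowing time $T_1$ of $1$, set $Y_t=g_t(1)-V_t$ and $h_t=g_t'(1)$. It\^o's formula gives $dY_t=(2/Y_t)\,dt-\sqrt{\kappa}\,dB_t$ and $d\log h_t=-(2/Y_t^2)\,dt$, so
\[
d\log(Y_t/h_t)=\frac{8-\kappa}{2Y_t^2}\,dt-\frac{\sqrt{\kappa}}{Y_t}\,dB_t.
\]

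\emph{Step 1 (Brownian motion with drift).} Under the random time change $u(t)=\int_0^t Y_s^{-2}\,ds$, Dambis--Dubins--Schwarz produces a standard Brownian motion $\hat W$ with $\log(Y_{\tau(u)}/h_{\tau(u)})=\tfrac{8-\kappa}{2}u-\sqrt{\kappa}\hat W_u$. Since $Y_t/\sqrt{\kappa}$ is a Bessel process of dimension $1+4/\kappa$, $u$ ranges over all of $[0,\infty)$ as $t\uparrow T_1\wedge\infty$. The drift $(8-\kappa)/2$ being strictly positive for $\kappa<8$, the classical maximum formula for Brownian motion with negative drift gives, for every $r\in(0,1]$,
\[
\PP\!\left[\inf_{t<T_1}Y_t/h_t\le r\right]=\PP\!\left[\sup_{u\ge 0}\bigl(\sqrt{\kappa}\hat W_u-\tfrac{8-\kappa}{2}u\bigr)\ge-\log r\right]=r^{(8-\kappa)/\kappa}=r^\alpha.
\]

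\emph{Step 2 (Boundary Koebe).} I would show $\{\dist(\gamma,1)\le r\}\subset\{\inf_{t<T_1}Y_t/h_t\le Cr\}$ for some $C=C_\kappa$ and $r$ sufficiently small. Let $\tau_r=\inf\{t:|\gamma(t)-1|\le r\}$. A continuity argument shows $\tau_r<T_1$ almost surely on $\{\tau_r<\infty\}$: should $\gamma$ swallow $1$ without first coming within $r$ in Euclidean distance, the continuation is confined to the unbounded component of $\HH\setminus K_{T_1}$ and stays at positive distance from $1$ thereafter. At $\tau_r<T_1$ a real interval around $1$ lies outside $K_{\tau_r}$, so Schwarz reflection extends $g_{\tau_r}$ analytically to a disk about $1$, and Koebe's distortion theorem on this extension yields $|g_{\tau_r}(z)-g_{\tau_r}(1)|\le C|z-1|h_{\tau_r}$ on a smaller concentric disk. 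Letting $z\to\gamma(\tau_r)$ through $\HH\setminus K_{\tau_r}$ (so $g_{\tau_r}(z)\to V_{\tau_r}$) yields $Y_{\tau_r}\le Cr\,h_{\tau_r}$. Combining with Step 1 gives $\PP[\dist(\gamma,1)\le r]\le (Cr)^\alpha$ for small $r$, and the bound extends to all $r>0$ by adjusting the constant.

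\emph{Main obstacle.} The delicate point is making Step 2 robust when $\kappa\in(4,8)$: if $K_t$ has already swallowed a portion of $\R$ within $O(r)$ of $1$ without first getting within $r$ of $1$ in Euclidean distance, the Schwarz-extension radius can shrink to $O(r)$ and Koebe's constant degenerates. One resolution is to reinforce $\tau_r$ with a control on $\dist(K_t\cap\R,1)$, and to bound the probability of the residual event by a separate application of the boundary one-arm estimate at the swallowed point; a more robust distortion estimate based on extremal length or harmonic measure is an alternative.
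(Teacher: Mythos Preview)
The paper does not give its own proof of this theorem; it simply quotes the result from Alberts--Kozdron \cite{Albert-Kozdron}. So there is no ``paper's proof'' to compare against, and your proposal should be judged on its own merits.

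Your Step~1 is correct and standard: the exact identity $\PP[\inf_{t<T_1} Y_t/h_t\le r]=r^\alpha$ for $r\in(0,1]$ follows from the time-changed Brownian-motion-with-drift picture exactly as you wrote.

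Step~2, however, has a gap that is more basic than the $\kappa\in(4,8)$ issue you flag. The Schwarz reflection of $g_{\tau_r}$ is univalent on $B(1,\rho)$ with $\rho=\dist(1,\overline{K_{\tau_r}})$, and the Koebe growth/distortion bound $|g_{\tau_r}(z)-g_{\tau_r}(1)|\le C\,h_{\tau_r}|z-1|$ is valid only on a \emph{compact} subdisk, say $B(1,\rho/2)$. But $\gamma(\tau_r)$ lies on $\partial B(1,r)$ with $r\ge\rho$ (indeed $r=\rho$ when $\kappa\le4$), so $\gamma(\tau_r)$ is on or outside the boundary of the disk of univalence, where the growth bound degenerates (the Koebe function shows boundary values can be infinite). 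Thus ``letting $z\to\gamma(\tau_r)$'' does not yield $Y_{\tau_r}\le C\,h_{\tau_r}r$; this problem is already present for $\kappa\le4$.

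The desired inclusion $\{\dist(\gamma,1)\le r\}\subset\{\inf_t Y_t/h_t\le Cr\}$ \emph{is} true, but it needs an extra ingredient. One clean fix: at time $\tau_r$ pick $z_0=1+ir/2\in B(1,r)\cap\HH\subset H_{\tau_r}$. Koebe on $B(1,r)$ gives $|g_{\tau_r}(z_0)-g_{\tau_r}(1)|\le C\,h_{\tau_r}r$ and in particular $\Imm g_{\tau_r}(z_0)\le C\,h_{\tau_r}r$. Since $\dist(z_0,\partial H_{\tau_r})\ge r/2$ while $|z_0-\gamma(\tau_r)|\le 3r/2$, a harmonic-measure (Beurling-type) argument shows that from $z_0$ both boundary arcs of $\partial H_{\tau_r}$ meeting at the tip carry harmonic measure bounded below by a universal $c>0$; equivalently $\arg\big(g_{\tau_r}(z_0)-V_{\tau_r}\big)\in[c\pi,(1-c)\pi]$, whence $|g_{\tau_r}(z_0)-V_{\tau_r}|\le C\,\Imm g_{\tau_r}(z_0)$. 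Combining, $Y_{\tau_r}\le|g_{\tau_r}(1)-g_{\tau_r}(z_0)|+|g_{\tau_r}(z_0)-V_{\tau_r}|\le C\,h_{\tau_r}r$, which is what you need. Alternatively, the original Alberts--Kozdron argument avoids this issue entirely by tracking the images $g_t(1\pm r)$ of the feet of the semicircle rather than $g_t(1)$ and $g_t'(1)$.
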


%An immediate corollary of this theorem is that for any fixed $x_0\in\R\sem\{0\}$, the probability that $\gamma$ visits $x_0$ is $0$.

We will express the above theorem in another form using the notation of extremal distance. The reader may refer to \cite{Pom-bond} for the definition and properties of extremal distance (length). We use $d_D(L_1,L_2)$ to denote the extremal distance between $L_1$ and $L_2$ in $D$. Suppose $K$ is a nonempty $\HH$-hull with $\lin K\cap \lin\R_-=\emptyset$. Let $x_K=\max\{\lin K\cap\R\}$ and $r_K=\max\{|z-x_K|:z\in \lin K\}$. It is well known that there are absolute constants $C$ and $M$ such that $\frac{r_K}{x_K}\le C e^{-\pi d_{\HH}(K,\R_-)}$ if $d_{\HH}(K,\R_-)\ge M$. So the above theorem implies the following corollary.

\begin{Lemma} {\bf [Boundary estimate v.1]}
  Let $\gamma$ be as above. Then for any $\HH$-hull $K$ with $\lin K\cap \lin\R_-=\emptyset$, we have
$$\PP[ \gamma\cap K\ne \emptyset] \le Ce^{ -\alpha\pi d_{\HH}(K,\R_-)}.$$
The same is true if $\R_-$ is replaced with $\R_+$. \label{boundary-lem1}
\end{Lemma}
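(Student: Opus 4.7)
The strategy is to reduce Lemma \ref{right-ctns-B} [I mean Boundary estimate v.1] directly to Theorem \ref{A-K} via the extremal‑distance comparison inequality stated in the paragraph just before the lemma, namely that there are absolute constants $C_0$ and $M$ with $r_K/x_K\le C_0 e^{-\pi d_{\HH}(K,\R_-)}$ whenever $d_{\HH}(K,\R_-)\ge M$.

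First I would dispose of the easy regime. If $d_{\HH}(K,\R_-)<M$, then $e^{-\alpha\pi d_{\HH}(K,\R_-)}\ge e^{-\alpha\pi M}$ is bounded below by a positive constant depending only on $\kappa$, so the claimed bound is trivial by enlarging $C$ (since $\PP[\gamma\cap K\ne\emptyset]\le 1$).

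Next, assume $d_{\HH}(K,\R_-)\ge M$. Since $K$ is a nonempty $\HH$‑hull with $\lin K\cap\lin\R_-=\emptyset$, the set $\lin K\cap\R$ is a nonempty subset of $(0,\infty)$, so $x_K>0$ is well defined. The key geometric observation is that if $\gamma$ meets $K$ at some point $z\in K$, then $|z-x_K|\le r_K$ by the very definition of $r_K$, whence $\dist(\gamma,x_K)\le r_K$. Hence the event $\{\gamma\cap K\ne\emptyset\}$ is contained in $\{\dist(\gamma,x_K)\le r_K\}$. Applying Theorem \ref{A-K} with $x_0=x_K$ and $r=r_K$ gives
\[
\PP[\gamma\cap K\ne\emptyset]\;\le\;\PP[\dist(\gamma,x_K)\le r_K]\;\le\; C\Big(\frac{r_K}{x_K}\Big)^{\alpha}.
\]
Combining this with the extremal‑distance bound $r_K/x_K\le C_0 e^{-\pi d_{\HH}(K,\R_-)}$ yields the desired estimate $\PP[\gamma\cap K\ne\emptyset]\le C e^{-\alpha\pi d_{\HH}(K,\R_-)}$.

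The case where $\R_-$ is replaced by $\R_+$ follows by the symmetry $z\mapsto -\lin z$, under which $SLE_\kappa$ in $\HH$ from $0$ to $\infty$ is invariant in distribution; this swaps the two half‑lines and preserves $d_{\HH}$, so the same argument applies. There is no real obstacle here: the only nontrivial input is Theorem \ref{A-K}, and the extremal‑distance comparison is a classical fact from \cite{Pom-bond}. The only point one must be slightly careful about is verifying $x_K>0$ so that Theorem \ref{A-K} is applicable, which is immediate from the hypothesis $\lin K\cap\lin\R_-=\emptyset$.
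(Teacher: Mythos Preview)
Your proof is correct and follows exactly the approach the paper intends: the paper's own argument is just the sentence ``So the above theorem implies the following corollary,'' and you have filled in precisely those details (splitting off the regime $d_{\HH}(K,\R_-)<M$, using the inclusion $\{\gamma\cap K\ne\emptyset\}\subset\{\dist(\gamma,x_K)\le r_K\}$, applying Theorem~\ref{A-K}, and then the extremal-distance comparison). The symmetry argument for $\R_+$ is also the natural one.
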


Using conformal invariance and comparison principle of extremal distance, we immediately get the following version of boundary estimate from the previous one.

\begin{Lemma} {\bf [Boundary estimate v.2]}
Let $D$ be a regular simply connected domain, and $w_0$ and $w_\infty$ be two distinct prime ends of $D$.  Let $\rho$ and $\eta$ be two disjoint crosscuts in $D$ such that $D(\rho;\eta)$ is neither a neighborhood of $w_0$ nor a neighborhood of $w_\infty$ in $D$. For $w_0$, the condition means that either $D\sem \rho$ is a neighborhood of $w_0$ and $D(\rho;w_0)=D^*(\rho;\eta)$, or $w_0$ is a prime end determined by $\rho$; and likewise for $w_\infty$. Let $\gamma$ be an $SLE_\kappa$ curve in $D$ from $w_0$ to $w_\infty$. Then
$$ \PP[\gamma\cap  ({\eta}\cup D^*(\eta;w_\infty))\ne\emptyset]\le C e^{-\alpha \pi d_D(\rho,\eta)}. $$
\end{Lemma}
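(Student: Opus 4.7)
The plan is to transfer the problem to $\HH$ via conformal invariance of $SLE$, apply Boundary estimate v.1 there, and then bound $d_\HH(K,\R_-)$ from below by $d_D(\rho,\eta)$ using the standard serial composition and monotonicity properties of extremal distance.

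Concretely, let $F:D\to\HH$ be a conformal map with $F(w_0)=0$, $F(w_\infty)=\infty$; by conformal invariance, $F\circ\gamma$ is an $SLE_\kappa$ in $\HH$ from $0$ to $\infty$. Let $K$ denote the closure in $\HH$ of $F(\eta)\cup F(D^*(\eta;w_\infty))$. Since $F(D^*(\eta;w_\infty))$ is the component of $\HH\sem F(\eta)$ that is not a neighborhood of $\infty$, $K$ is an $\HH$-hull whose only $\HH$-boundary is the crosscut $F(\eta)$, and the event in question coincides with $\{F\circ\gamma\cap K\ne\emptyset\}$. The hypothesis on $\rho$ (read through the remark for each of $w_0$ and $w_\infty$) says that both $w_0$ and $w_\infty$ are separated from $\eta$ by $\rho$ in $D$; consequently $F(\rho)$ is a crosscut in $\HH$ that separates $K$ from $\{0,\infty\}$. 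This forces both endpoints of $F(\rho)$ to lie in $\lin{\R_+}$ or both in $\lin{\R_-}$; WLOG take the first, so that $K$ sits inside the pocket $U$ of $\HH\sem F(\rho)$ and $\lin K\cap\lin{\R_-}=\emptyset$ (a trivial perturbation handles the degenerate subcase where an endpoint of $F(\rho)$ equals $0$ or $\infty$).

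Boundary estimate v.1 applied to $K$ in $\HH$ then gives
$$\PP[F\circ\gamma\cap K\ne\emptyset]\le Ce^{-\alpha\pi d_\HH(K,\R_-)},$$
so it remains to prove $d_\HH(K,\R_-)\ge d_D(\rho,\eta)$. Since $F(\rho)$ separates $K$ from $\R_-$ in $\HH$, the serial composition rule yields $d_\HH(K,\R_-)\ge d_U(K,F(\rho))$. Because the $\HH$-boundary of $K$ inside $U$ is exactly $F(\eta)$, every curve from $K$ to $F(\rho)$ in $U$ may be truncated at its last visit to $F(\eta)$, giving $d_U(K,F(\rho))=d_{U\sem K}(F(\eta),F(\rho))$. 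Conformal invariance identifies $U\sem K$ with the component $V$ of $D\sem(\rho\cup\eta\cup D^*(\eta;w_\infty))$ lying between $\rho$ and $\eta$, so $d_{U\sem K}(F(\eta),F(\rho))=d_V(\eta,\rho)$. Finally, $V\subset D$ together with monotonicity of extremal distance in the domain delivers $d_V(\eta,\rho)\ge d_D(\eta,\rho)$, completing the chain.

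The principal obstacle I anticipate is the combinatorial verification that the hypothesis on $\rho$ actually forces $F(\rho)$ to separate $K$ from both $0$ and $\infty$; one must unpack the two subcases in the remark (namely ``$D\sem\rho$ is a neighborhood of $w_j$ and $D(\rho;w_j)=D^*(\rho;\eta)$'' versus ``$w_j$ is determined by $\rho$'') separately for $j=0$ and $j=\infty$, and handle the fiddly degenerate configurations in which an endpoint of $F(\rho)$ coincides with $0$ or $\infty$ by a small perturbation, so that v.1's strict condition $\lin K\cap\lin{\R_-}=\emptyset$ is genuinely met.
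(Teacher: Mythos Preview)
Your proposal is correct and follows exactly the route the paper indicates. The paper does not write out a proof for this lemma; it simply states that it follows ``using conformal invariance and comparison principle of extremal distance'' from Boundary estimate v.1, and your argument is precisely a careful unpacking of that sentence: map conformally to $\HH$, identify the target set with an $\HH$-hull $K$, apply v.1, and then use the containment/serial and monotonicity properties of extremal length to bound $d_\HH(K,\R_-)$ below by $d_D(\rho,\eta)$. Your detour through $d_U(K,F(\rho))=d_{U\sem K}(F(\eta),F(\rho))$ is slightly more elaborate than needed (one can argue directly that every curve in $\HH$ from $K$ to $\R_-$ contains a subcurve from $F(\eta)$ to $F(\rho)$, which already yields $d_\HH(K,\R_-)\ge d_\HH(F(\eta),F(\rho))=d_D(\eta,\rho)$), but it is equally valid.
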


%\no{\bf Remark.} An immediate corollary of Theorem \ref{A-K} is that if $\eta$ is a crosscut in $\HH$, whose two end points are either both positive or both negative, then a.s.\ an $SLE_\kappa$ curve in $\HH$ from $0$ to $\infty$ must visit $\eta$ before visiting $\HH^*(\eta)$. This implies that, for the $D,w_0,w_\infty,\gamma$ in the above lemma, if $\eta$ is a crosscut in $D$ such that $D\sem\eta$ is a neighborhood of both $w_0$ and $w_\infty$, and $D(\rho;w_0)=D(\rho;w_\infty)$, then a.s.\ $\gamma$ visits $\eta$ before visiting $D^*(\eta;w_0)=D^*(\eta;w_\infty)$. Thus, the statement of the above lemma has the same power if ${\eta}\cup D^*(\eta;w_\infty)$ is replaced by $\eta$.

We now combine the interior estimate and the boundary estimate to get the following one-point estimate, which implies the case $n=1$ in Theorem \ref{mainthm2}.

\begin{Lemma} {\bf [One-point estimate]}
Let $D$ be an $\HH$-domain with a prime end $w_0\ne\infty$. Let $\gamma$ be an $SLE_\kappa$ curve in $D$ from $w_0$ to $\infty$.
  Let $z_0\in\lin\HH$, $y_0=\Imm z_0\ge0$, and $R>r>0$. Let $\rho=\{z\in\HH:|z-z_0|=R\}$ and $\eta=\{z\in\HH:|z-z_0|=r\}$. Suppose $\{z\in\HH:|z-z_0|\le R\}\subset D$ and $w_0\not\in \{x\in\R:|x-z_0|<R\}$. Then
  $$\PP[\gamma\cap \eta\ne\emptyset]\le C\frac{P_{y_0}(r)}{P_{y_0}(R)}.$$ \label{one-point-lemma}
\end{Lemma}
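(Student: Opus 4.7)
The plan is a case analysis on the position of $y_0$ relative to $r$ and $R$. The target ratio $P_{y_0}(r)/P_{y_0}(R)$ simplifies in each regime: it equals $(r/R)^{2-d}$ when $y_0\ge R$, equals $(r/R)^\alpha$ when $y_0\le r$, and equals $y_0^{\alpha-(2-d)}r^{2-d}/R^\alpha$ when $r<y_0<R$. The first case will be handled by the Interior Estimate alone, the second by the Boundary Estimate alone, and the third by chaining the two through the Domain Markov Property. In the first case ($y_0\ge R$) the half-disc $\{z\in\HH:|z-z_0|\le R\}$ is a full Euclidean disc sitting inside $\HH$, and by hypothesis it lies in $D$, so $\dist(z_0,\pa D)\ge R$; the Interior Estimate then gives $\PP[\gamma\cap\eta\ne\emptyset]\le\PP[\dist(\gamma,z_0)\le r]\le C(r/R)^{2-d}$.

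In the second case ($y_0\le r$) I apply Boundary Estimate v.2 to the crosscuts $\rho,\eta$ in $D$. The hypothesis $w_0\notin\{x\in\R:|x-z_0|<R\}$ places $w_0$ in $D^*(\rho;\eta)$, and $\infty\in D^*(\rho;\eta)$ as well, so the separation condition holds and we obtain $\PP[\gamma\cap\eta\ne\emptyset]\le C\exp(-\alpha\pi\,d_D(\rho,\eta))$. By monotonicity of extremal length (fewer curves in the smaller domain $D\subset\HH$), $d_D(\rho,\eta)\ge d_\HH(\rho,\eta)$. To estimate the latter, map $\HH$ conformally onto the strip $\{0<\Imm w<\pi\}$ by $z\mapsto\log(z-\Ree z_0)$; a direct computation using $y_0\le r$ shows that the image of $\rho$ lies in $\{\Ree w\ge\log R-C\}$ and the image of $\eta$ in $\{\Ree w\le\log r+C\}$, provided $R\ge 4r$ (otherwise the target bound $(r/R)^\alpha$ is bounded below and the conclusion is trivial). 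The standard lower bound on extremal distance in a strip by (horizontal gap)$/\pi$ then yields $d_\HH(\rho,\eta)\ge\pi^{-1}\log(R/r)-C$, and hence $\PP\le C(r/R)^\alpha$.

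In the third case ($r<y_0<R$), let $T$ be the first time $\gamma$ hits $\lin{B}(z_0,y_0)$. The second case applied with inner radius $y_0$ in place of $r$ gives $\PP[T<\infty]\le C(y_0/R)^\alpha$. On $\{T<\infty\}$ the Domain Markov Property identifies $\gamma^T:=\gamma(T+\,\cdot\,)$ as $SLE_\kappa$ in $D_T$ from $\gamma(T)$ to $\infty$. Since $\gamma[0,T]$ avoids $B(z_0,y_0)$ and $\dist(z_0,\R)=y_0$, we have $\dist(z_0,\pa D_T)\ge y_0$ whenever $z_0\in D_T$; if $z_0$ is swallowed by time $T$, then $\gamma^T$ is separated from $z_0$ by $\gamma[0,T]$ and stays at distance at least $y_0>r$ from $z_0$, making the conditional probability zero. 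In either subcase the Interior Estimate applied to $\gamma^T$ in $D_T$ gives $\PP[\dist(\gamma^T,z_0)\le r\mid\F_T]\le C(r/y_0)^{2-d}$, and integrating against $\mathbf 1_{\{T<\infty\}}$ yields the required $C\,y_0^{\alpha-(2-d)}r^{2-d}/R^\alpha$. The main obstacle is the extremal-distance estimate in the second case, particularly when $y_0$ is comparable to $r$; the conformal change of coordinates to the strip together with the horizontal-gap bound neatly handles all configurations uniformly and is what makes the three-case patchwork glue together with the correct exponents.
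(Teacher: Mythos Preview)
Your proof is correct and follows the same three-case strategy as the paper: interior estimate when $y_0\ge R$, boundary estimate when $y_0\le r$, and the two chained via DMP at the intermediate circle $\{|z-z_0|=y_0\}$ when $r<y_0<R$. The only real difference is in Case 2: the paper replaces $\rho,\eta$ by the auxiliary half-circles $\{z\in\HH:|z-\Ree z_0|=2r\}$ and $\{z\in\HH:|z-\Ree z_0|=R/2\}$, which are concentric and centered on $\R$, so their extremal distance in $\HH$ is exactly $\pi^{-1}\log(R/(4r))$; you instead apply Boundary Estimate v.2 directly to $\rho,\eta$ and estimate $d_\HH(\rho,\eta)$ by the logarithmic map to a strip. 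Both routes give the same bound up to constants, and your handling of the swallowing sub-case in Case 3 (which the paper leaves implicit) is a reasonable addition.
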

\begin{proof}
  We consider different cases.
  Case 1: $y_0 \ge R$. The conclusion follows from the interior estimate because $\frac{P_{y_0}(r)}{P_{y_0}(R)}=(\frac r R)^{2-d}$ and $\dist(z_0,\pa D)\ge R$. Case 2: $y_0\le r$. We have $\frac{P_{y_0}(r)}{P_{y_0}(R)}=(\frac r R)^{\alpha}$. By increasing the value of $C$, we may assume that $R> 4r$. The conclusion follows from the boundary estimate because $\rho$ and $\eta$ are separated in $D$ by the two crosscuts $\{z\in\HH: |z-\Ree z_0|=2r\}$ and $\{z\in\HH: |z-\Ree z_0|=R/2\}$, and the extremal distance between them in $D$ is $\log(R/(4r))/\pi$. Case 3: $R> y_0> r$. Let $\rho'=\{z\in \HH:|z-z_0|=y_0\}$, which separates $\rho$ from $\eta$ in $D$. Let $T$ be the first time that $\gamma$ hits $\rho'$, and $\gamma^T(t)=\gamma(T+t)$, $0\le t<\infty$, if $T<\infty$. Then $T$ is an stopping time, and $\{\gamma\cap \eta\ne\emptyset\}=\{\gamma^T\cap\eta\ne\emptyset\}\subset \{T<\infty\}$ almost surely. From the result of Case 2, $\PP[T<\infty]\le C\frac{P_{y_0}(y_0)}{P_{y_0}(R)}$. From DMP, conditioned on $\gamma[0,T]$ and $\{T<\infty\}$, the $\gamma^T$ is an $SLE_\kappa$ curve in $D(\gamma[0,T])$ from $\gamma(T)$ to $\infty$. Since $\dist(z_0,\pa D_T)=y_0$, from the result of Case 1, we get
  $\PP[\gamma^T\cap \eta\ne\emptyset|\gamma[0,T],T<\infty]\le C \frac{P_{y_0}(r)}{P_{y_0}(y_0)}$.  Combining this with the estimate for $\PP[T<\infty]$, we get the conclusion in Case 3.
\end{proof}

The following version of boundary estimate will be frequently used in this paper.

\begin{Lemma} {\bf [Boundary estimate v.3]}
Let $D$ be an $\HH$-domain with a prime end $w_0\ne\infty$. Let $\gamma$ be an $SLE_\kappa$ curve in $D$ from $w_0$ to $\infty$.
Let $\rho$ be a crosscut in $D$ such that $D^*(\rho)$ is not a neighborhood of $w_0$ in $D$, and $S\subset D^*(\rho)$. Let $\til D$ be a domain that contains $D$, and $\til\rho$ a subset of $\til D$ that contains $\rho$. Let $\til\eta$ be a Jordan curve in $\til D$, which intersects $\pa D$, or a crosscut in $\til D$. Suppose that $\til {\eta}$ disconnects $S$ from $\til \rho$ in $\til D$. Then
  $$\PP[\gamma\cap S\ne\emptyset]\le C e^{-\pi \alpha  d_{\til D}(\til\rho,\til{\eta})}. $$\label{boundary-lem}
\end{Lemma}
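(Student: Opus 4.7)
The plan is to deduce v.3 from Boundary estimate v.2 by extracting from $\tilde\eta$ a single sub-crosscut $\eta$ of $D$ that disconnects $S$ from $\rho$ in $D$, and then using monotonicity of extremal distance to replace $d_D(\rho,\eta)$ by the desired $d_{\tilde D}(\tilde\rho,\tilde\eta)$.

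First I would construct $\eta$. The connected components of $\tilde\eta\cap D$ are open arcs, each a crosscut of $D$. Since $\tilde\eta$ disconnects $S$ from $\tilde\rho\supset\rho$ in $\tilde D$, every path in $D$ (a fortiori in $\tilde D$) from $\rho$ to $S$ must cross $\tilde\eta\cap D$. Applying Lemma \ref{first-crosscut} with $\tilde\eta$ playing the role of the Jordan curve/crosscut and with $\rho$ and a suitable connected enlargement of $S$ as $Z_1,Z_2$, I obtain a single sub-crosscut $\eta\subset\tilde\eta\cap D$ of $D$ that disconnects $\rho$ from $S$ in $D$. In particular, $\eta\subset D^*(\rho;\infty)$, since $\eta$ lies on the same side of $\rho$ in $D$ as $S$.

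Next I would apply v.2. The crosscuts $\rho$ and $\eta$ of $D$ are disjoint (being sub-arcs of the disjoint sets $\tilde\rho$ and $\tilde\eta$), and the component $D(\rho;\eta)\subset D^*(\rho;\infty)$ is by hypothesis not a neighborhood of $w_0$ and certainly does not contain $\infty$. So v.2 gives
\[
\PP[\gamma\cap(\eta\cup D^*(\eta;\infty))\ne\emptyset]\le Ce^{-\alpha\pi d_D(\rho,\eta)}.
\]
Because $\eta$ separates $S$ from $\rho$ in $D$ and $\rho$ separates $S$ from $\infty$ in $D$, the crosscut $\eta$ itself separates $S$ from $\infty$ in $D$; hence $S\subset\eta\cup D^*(\eta;\infty)$ and the above bound also controls $\PP[\gamma\cap S\ne\emptyset]$. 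Finally, monotonicity of extremal distance gives $d_D(\rho,\eta)\ge d_{\tilde D}(\rho,\eta)\ge d_{\tilde D}(\tilde\rho,\tilde\eta)$ (shrinking the domain from $\tilde D$ to $D$ only increases extremal distance, and shrinking the two target sets from $\tilde\rho,\tilde\eta$ to $\rho,\eta$ does likewise), which completes the proof.

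The main obstacle is the construction of $\eta$: applying Lemma \ref{first-crosscut} requires $Z_1,Z_2$ to be connected subsets of $\tilde D$ with $D$ a neighborhood of each, which is immediate for $Z_1=\rho$ but for $Z_2$ forces replacing $S$ by the union of those components of $\tilde D\sem\tilde\eta$ that meet $S$. This union is a single connected component of $\tilde D\sem\tilde\eta$ (since $\tilde\eta$ already disconnects $S$ from $\tilde\rho$, all of $S$ lies on one side of $\tilde\eta$), so the enlargement is legitimate, and the remaining topological hypotheses follow from $S\subset D^*(\rho)\subset D$.
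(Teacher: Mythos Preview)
Your proposal is correct and is exactly the paper's argument: use Lemma~\ref{first-crosscut} to extract a sub-crosscut $\eta\subset\tilde\eta\cap D$ separating $S$ from $\rho$ in $D$, apply Boundary estimate v.2 to the pair $(\rho,\eta)$, and finish by the comparison principle for extremal distance to pass from $d_D(\rho,\eta)$ to $d_{\tilde D}(\tilde\rho,\tilde\eta)$. Your discussion of the connectedness hypothesis in Lemma~\ref{first-crosscut} is actually more careful than the paper's (which simply invokes the lemma without comment); note however that your proposed enlargement of $S$ to a full component of $\tilde D\setminus\tilde\eta$ need not lie inside $D$, so it does not quite satisfy the hypotheses either---in every application in the paper $S$ is already a connected arc, so the issue is moot.
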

\begin{proof}
From Lemma \ref{first-crosscut}, $\til{\eta}$ contains a sub-crosscut in $E$, denoted by $\eta$, which disconnects $S$ from $\rho$. Since $S\subset D^*(\rho)$, we have $\eta\subset D^*(\rho)$ and $S\subset D^*(\eta)$. Thus, $D(\rho;\eta)=D^*(\rho)$ is not a neighborhood of either $\infty$ or $w_0$ in $D$.  Using the boundary estimate v.2, we get
$$\PP[\gamma\cap S\ne\emptyset]\le \PP[\gamma\cap D^*(\eta)\ne\emptyset]\le C e^{-\pi \alpha  d_D(\rho,\eta)}\le C e^{-\pi \alpha  d_{\til D}(\til \rho,\til {\eta})}.$$
\end{proof}

%In the next section, we will state the boundary estimates for $SLE$ in  regular $\HH$-domains using the notation of crosscuts.

%This theorem tells us about the probability that we go near an interior point. In our paper we also need the probability that we go near a boundary point. We will state this theorem in section \ref{Crosssec} when we defined the necessary terms. We will end this section by definition of the multi-point Green's function. See \cite{LW} for more details.

%\begin{Definition}
%Suppose $z_1,...,z_n$ are $n$ distinct points in $\HH$.  Take $\gamma$ as $SLE$ from 0 to $\infty$ in $\HH$ as before. Then we have
%\[
%G(z_1,...z_n)= \lim_{\epsilon_1,...,\epsilon_n \downarrow 0} \epsilon_1^{d-2}...\epsilon_n^{d-2} \Prob[ \dist(\gamma,z_1) \leq \epsilon_1,...,\dist(\gamma,z_n) \leq \epsilon_n].
%\]
%\end{Definition}

%Again we may or may not assume at the moment that this function exists.

\section{Main Lemmas}  \label{Mainsec}
In this section, we let $\gamma$ be an $SLE_\kappa$ curve in $\HH$ from $0$ to $\infty$. Given any set $S$, let $\tau_S=\inf\{t\ge 0:\gamma(t)\in S\}$; we set $\inf\emptyset=\infty$ by convention.
Let $(\F_t)$ be the right-continuous filtration generated by $\gamma$. For $t_0\ge 0$, let $\gamma^{t_0}(t)=\gamma(t_0+t)$, $0\le t<\infty$, and $H_{t_0}=\HH(\gamma[0,t_0])$. Recall the DMP: if $T$ is an $(\F_t)$-stopping time, then conditioned on $\F_T$ and $T<\infty$, $\gamma^T$ is an $SLE_\kappa$ curve in $H_T$ from (the prime end of $H_T$ determined by) $\gamma(T)$ to $\infty$.

\begin{Theorem}
Let $m\in\N$, $z_j\in\lin\HH$ and $ R_j\ge r_j>0$, $0\le j\le m$. Let $\ha\xi_j=\{|z-z_j|=R_j\}$, $\xi_j=\{|z-z_j|=r_j\}$, and $\ha D_j=\{|z-z_j|\le R_j\}$, $0\le j\le m$. Suppose that $0\not\in \ha D_j$, $0\le j\le m$; and $\ha D_0\cap \ha D_j=\emptyset$, $1\le j\le m$. Let $r_0'\in (0,r_0)$ and $\xi_0'=\{|z-z_0|=r_0'\}$.
Let $$E=\{\tau_{\xi_0}<\tau_{\ha \xi_1}\le \tau_{\xi_1}< \cdots< \tau_{\ha\xi_m}\le \tau_{\xi_m}<\tau_{\xi_0'}<\infty\}.$$
Let $y_j=\Imm z_j$, $1\le j\le m$. Then we have
  $$\PP[{E}|\F_{\tau_{\xi_0}}]\le C^m \Big(\frac{r_0}{R_0}\Big)^{\alpha/4} \prod_{j=1}^m  \frac{P_{y_j}(r_j)}{P_{y_j}(R_j)}.$$
  \label{key-lem}
\end{Theorem}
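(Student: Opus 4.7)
The plan is to iterate the Domain Markov Property (DMP) along the stopping times $\tau_{\ha\xi_j}$, invoking the one-point lemma (Lemma~\ref{one-point-lemma}) at each stage to produce the product $\prod_{j=1}^m C\,P_{y_j}(r_j)/P_{y_j}(R_j)$, and then to extract the extra factor $(r_0/R_0)^{\alpha/4}$ from the final ``return to $\xi_0'$'' step via the boundary estimate v.3 (Lemma~\ref{boundary-lem}) applied with an intermediate circle of radius $\sqrt{r_0R_0}$ around $z_0$. After conditioning on $\F_{\tau_{\xi_0}}$ and applying DMP, $\gamma^{\tau_{\xi_0}}$ is an $SLE_\kappa$ in $H_{\tau_{\xi_0}}$ from $\gamma(\tau_{\xi_0})\in\xi_0$ to $\infty$, and $E$ reduces to the requirement that this continuation successively visit $\ha\xi_1,\xi_1,\dots,\ha\xi_m,\xi_m,\xi_0'$.

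At each stopping time $\tau_{\ha\xi_j}$ I apply DMP. On $E$ the curve has not entered the open disk $\{|z-z_j|<R_j\}$ before $\tau_{\ha\xi_j}$, so (up to a single boundary touch at $\gamma(\tau_{\ha\xi_j})$) the set $\ha D_j\cap\HH$ is contained in $H_{\tau_{\ha\xi_j}}$, and Lemma~\ref{one-point-lemma} with $(z_0,R,r)\leftarrow(z_j,R_j,r_j)$ gives $\PP[\tau_{\xi_j}<\infty\mid\F_{\tau_{\ha\xi_j}}]\le C\,P_{y_j}(r_j)/P_{y_j}(R_j)$. The intermediate transitions $\tau_{\xi_j}\to\tau_{\ha\xi_{j+1}}$ are bounded trivially by $1$, and chaining via the tower property yields the product factor in the statement.

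For the return step, after $\tau_{\xi_m}$ the curve sits in $\ha D_m$, at distance at least $R_0$ from $z_0$ (since $\ha D_0\cap\ha D_m=\emptyset$). I would apply Lemma~\ref{boundary-lem} to bound $\PP[\gamma^{\tau_{\xi_m}}\cap\xi_0'\ne\emptyset\mid\F_{\tau_{\xi_m}}]$, with SLE domain $D=H_{\tau_{\xi_m}}$, ambient $\til D=\HH$, $\rho$ a sub-crosscut of $\ha\xi_0\cap D$ (obtained from Lemma~\ref{first-crosscut}) separating $\xi_0'$ from both $\gamma(\tau_{\xi_m})$ and $\infty$, $\til\rho=\ha\xi_0\cap\HH$, and $\til\eta=\{|z-z_0|=\sqrt{r_0R_0}\}\cap\HH$. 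The hypothesis on $\til\eta$ holds in all regimes: either $\til\eta$ is a crosscut in $\HH$ (when $y_0<\sqrt{r_0R_0}$), or $\til\eta$ is a Jordan curve in $\HH$ that meets $\pa D$ because the original $\gamma$ had to cross $\til\eta$ on its way from $0$ to $\xi_0$ before time $\tau_{\xi_0}$ (when $y_0\ge\sqrt{r_0R_0}$). By monotonicity of extremal distance in the ambient domain, $d_\HH(\til\rho,\til\eta)\ge d_\C(\ha\xi_0,\{|z-z_0|=\sqrt{r_0R_0}\})=\log(R_0/r_0)/(4\pi)$, so Lemma~\ref{boundary-lem} yields the desired $C(r_0/R_0)^{\alpha/4}$. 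The main technical obstacle is verifying this setup uniformly in $y_0$: constructing $\rho$ in the possibly-swallowed domain $D$ via Lemma~\ref{first-crosscut}, and justifying the $\til\eta\cap\pa D\ne\emptyset$ condition in the interior regime $y_0\ge\sqrt{r_0R_0}$ where $\til\eta$ is no longer a crosscut. Combining the chaining bound with this return estimate yields the stated inequality.
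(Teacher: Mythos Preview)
Your chaining argument for the product $\prod_j C\,P_{y_j}(r_j)/P_{y_j}(R_j)$ via the one-point lemma is correct and matches the paper's (\ref{1-pt*}). The gap is in the return step.

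You assert that Lemma~\ref{first-crosscut} furnishes a sub-crosscut $\rho$ of $\ha\xi_0\cap H_{\tau_{\xi_m}}$ that separates $\xi_0'$ from \emph{both} $\gamma(\tau_{\xi_m})$ and $\infty$. But Lemma~\ref{first-crosscut} only produces a crosscut separating two prescribed sets; it says nothing about a third. For Lemma~\ref{boundary-lem} to apply you need $D^*(\rho)$ not to be a neighborhood of the tip, i.e.\ the tip must lie on the $\infty$-side of $\rho$. This can fail. Concretely, take $m=1$ and let $\gamma$, after hitting $\xi_0$, exit $\ha D_0$, make a large excursion in $\HH\setminus\ha D_0$ whose two endpoints lie on $\ha\xi_0$ and which encloses $\ha D_1$, then re-enter $\ha D_0$ (without reaching $\xi_0'$), re-exit, and finally proceed to $\xi_1$. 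The tip now sits inside the excursion loop, so every path in $H_{\tau_{\xi_1}}$ from the tip to $\infty$ must cross $\ha\xi_0$; in suitable configurations the only sub-crosscut of $\ha\xi_0$ that separates $\xi_0'$ from $\infty$ also has the tip on the $\xi_0'$-side. Then the hypothesis ``$D^*(\rho)$ is not a neighborhood of $w_0$'' of Lemma~\ref{boundary-lem} is violated, and the return step yields no $(r_0/R_0)^{\alpha/4}$ factor.

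The paper confronts exactly this obstruction. It introduces the intermediate circle $\rho=\{|z-z_0|=\sqrt{R_0r_0}\}$ and, for each $t$, the first sub-crosscut $\rho_t$ of $\rho$ in $H_t$ separating $\xi_0'$ from $\infty$. It then decomposes $E$ into $2m+1$ events $A_\iota$ according to which side of $\rho_{\tau_j}$ the sets $\HH^*(\xi_j)$ and $\HH^*(\xi_{j+1})$ lie on at each stopping time. Your scenario is precisely (the complement of) $A_{(m,m+1)}$; on $A_{(m,m+1)}$ the return step does give the factor as you propose, but on the remaining events the factor must be extracted earlier in the chain---either at the step $\tau_{j-1}\to\ha\tau_j$ (events $A_{(j-1,j)}$) or at an intermediate stopping time $\sigma_j\in(\tau_{j-1},\tau_j)$ when the side switches (events $A_{(j,j)}$, handled via Lemma~\ref{right-ctns-B} and a further dyadic decomposition). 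In each case the boundary estimate is applied at the moment the curve is forced to traverse the annulus between $\rho$ and $\ha\xi_0$ (or $\xi_0$), which is what costs $(r_0/R_0)^{\alpha/4}$. Without this decomposition the argument is incomplete.
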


\no{\bf Discussion.} From the $1$-point estimate, we see that, given $\gamma$ up to hitting $\ha\xi_j$, the probability that it reaches $\xi_j$ is at most $C \frac{P_{y_j}(r_j)}{P_{y_j}(R_j)}$. The DMP allows us to put these estimates together to get the product on the righthand side of the above formula. The key point of the proof is to use the boundary estimate to derive the factor $(\frac{r_0}{R_0})^{\alpha/4}$. Recall that the boundary estimate can be applied when the $SLE$ curve is required to cross a disjoint pair of crosscuts from the unbounded component to the bounded component determined by these crosscuts. But whether a given set lies in the bounded component may vary as the $SLE$ curve grows. So we have to carefully keep track of the changes of the ``topology'' situations.

\begin{proof} Let $\Xi$ be the set of $\xi_j$, $\ha\xi_j$, $0\le j\le n$, and $\xi_0'$. By Theorem \ref{A-K}, for any $\xi\in\Xi$, $\gamma$ almost surely does not visit $\xi\cap\R$. By discarding an event with probability zero, we may assume that $\gamma$ does not visit $\xi\cap \R$ for any $\xi\in\Xi$. Then for any $\xi\in\Xi$, $\tau_\xi=\tau_{\xi\cap\HH}$. Thus, it suffices to prove the lemma with each $\xi\in\Xi$ replaced by $\xi\cap\HH$. This means that every $\xi\in\Xi$ is a Jordan curve or crosscut in $\HH$. After that, we see that $\tau_\xi<\infty$ implies that $\gamma(\tau_\xi)\in\xi\cap\HH$, and $\gamma$ does not visit $\HH^*(\xi)$ before $\xi$.

Let $\tau_0=\tau_{\xi_0}$, $\ha\tau_j=\tau_{\ha\xi_j}$ and $\tau_j=\tau_{\xi_j}$, $1\le j\le m$, and $\tau_{m+1}=\tau_{\xi_0'}$.
From the DMP and one-point estimate (Lemma \ref{one-point-lemma}), we get
\BGE \PP[\tau_j<\infty|\F_{\ha\tau_{j}}]\le  C\frac{P_{y_j}(r_j)}{P_{y_j}(R_j)},\quad 1\le j\le m.\label{1-pt*}\EDE
Thus, $\PP[E|\F_{\tau_0}]\le C^m\prod_{j=1}^m \frac{P_{y_j}(r_j)}{P_{y_j}(R_j)}$. If $R_0=r_0$, the proof is finished.

%Define $$E^j=\{\tau_0<\tau_1'\le \tau_1< \cdots <\ha\tau_j\le \tau_j<\tau_{j+1}'\}\in\ \F_{\tau_j},\quad 0\le j\le m.$$ Then $E^0\supset E^1\supset\cdots\supset E^m\supset E$.

Suppose $R_0>r_0$. Let $\rho=\{z\in\HH:|z-z_0|=\sqrt{R_0r_0}\}$. Then $\rho$ is a Jordan curve or crosscut in $\HH$, which lies between $\ha\xi_0$ and $\xi_0$, and
\BGE d_{\HH}(\rho,\xi_0),d_{\HH}(\rho,\ha\xi_0)\ge \frac{\log(R_0/r_0)}{4\pi}.\label{extremal}\EDE

Also note that $\rho$ disconnects $\xi_0'$ from $\infty$. Let $T=\inf\{t\ge 0:\xi_0'\not\subset H_t\}$. For $\tau_0\le t<T$, $\xi_0'$ is a connected subset of $H_t$, and $\rho$ intersects $\pa H_t$. Thus, we may use Lemma \ref{first-crosscut} to define $\rho_t$ to be the first sub-crosscut of $\rho$ in $H_t$ that disconnects $\xi_0'$ from $\infty$ for $\tau_0\le t<T$. Note that every $\rho_t$ is $\F_t$-measurable.

%If $E$ occurs, then for $t\le\tau_j$, $1\le j\le m$, $\HH^*(\xi_j)$ is a connected subset of $H_t\sem\rho_t$, and so it is contained in either $H_{t}(\rho_{t})$ or $ H_{t}^*(\rho_{t})$.
Let $I=\{(j,j+1):0\le j\le m\}\cup\{(j,j):1\le j\le m\}$, and define $(A_\iota)_{\iota\in I}$ by
$$A_{(0,1)}=\{T>\tau_0\}\cap\{\HH^*(\xi_1)\subset H_{\tau_0}^*(\rho_{\tau_0})\}\in\F_{\tau_0};$$
$$A_{(j,j)}=\{T>\tau_j\}\cap\{\HH^*(\xi_j)\subset H_{\tau_{j-1}}(\rho_{\tau_{j-1}})\}\cap \{\HH^*(\xi_j)\subset H_{\tau_j}^*(\rho_{\tau_j})\}\in\F_{\tau_j},\quad 1\le j\le m;$$
$$A_{(j,j+1)}=\{T>\tau_j\}\cap\{\HH^*(\xi_j)\subset H_{\tau_j}(\rho_{\tau_j})\}\cap \{\HH^*(\xi_{j+1})\subset H_{\tau_j}^*(\rho_{\tau_j})\}\in\F_{\tau_{j}},\quad 1\le j\le m-1;$$
$$A_{(m,m+1)}=\{T>\tau_m\}\cap\{\HH^*(\xi_m)\subset H_{\tau_m}(\rho_{\tau_m})\}\in\F_{\tau_m}.$$
Suppose $E$ occurs. Then $\gamma$ does not visit $\xi_0'$ at any time $t\le\tau_m$. So $\xi_0'$ is a connected subset of $\HH\sem \gamma[0,\tau_m]$. Then we must have $\xi_0'\subset H_{\tau_m}$ because $\gamma^{\tau_m}$ visits $\xi_0'$, and $\gamma^{\tau_m}\subset \lin{H_{\tau_m}}\subset H_{\tau_m}\cup \gamma[0,\tau_m]$. Thus, $T>\tau_m>\tau_{m-1}>\cdots>\tau_1>\tau_0$. Similarly, since $\HH^*(\xi_j)$ is not visited by $\gamma$ at any time $t\le\tau_j$, we conclude that $\HH^*(\xi_j)\subset H_t$ for $t\le \tau_j$. Since $\HH^*(\xi_j)$ is disjoint from $\rho\supset \rho_t$, we conclude that $\HH^*(\xi_j)$ is contained in either $H_t(\rho_t)$ or $H_t^*(\rho_t)$ for any $t\le \tau_j$.

Define a strict total order on $I$ such that $(0,1)<(1,1)<(1,2)<(2,2)<\cdots<(m-1,m)<(m,m)<(m,m+1)$. Define a family of events $E_\iota$, $\iota\in I$, such that $E_\iota=E\sem\bigcup_{\iota':\iota'>\iota} A_{\iota'}$. Using induction, one can prove that
    $$E_{\iota}\subset \{\HH^*(\xi_{\iota_1})\subset H_{\tau_{\iota_2}}^*(\rho_{\tau_{\iota_2}})\},\quad \iota=(\iota_1,\iota_2)\in I\sem \{(m,m+1)\}.$$
    Especially, we get $$E_{0,1}=E\sem\bigcup_{\iota\in I\sem\{(0,1)\}} A_\iota\subset \{\HH^*(\xi_0)\subset H_{\tau_1}^*(\rho_{\tau_1})\}\subset A_{(0,1)}.$$
Thus, we have $E\subset\bigcup_{\iota\in I} A_\iota$. We will finish the proof by showing that
\BGE \PP[E\cap A_{\iota}|\F_{\tau_0}]  \le C^m \Big(\frac{r_0}{R_0}\Big)^{\alpha/4} \prod_{j=1}^m  \frac{P_{y_j}(r_j)}{P_{y_j}(R_j)}, \quad \iota\in I.\label{conclusion*}\EDE

%Let $R$ and $R'$ denote the annuli $\{1\ge |z-z_{h_0}|\ge e^{-s/2}\}$ and $\{e^{-s/2}\ge |z-z_{h_0}|\ge e^{-s}\}$, respectively. Note that, if $\tau_0<\infty$, then $R$ and $R'$ intersects $\gamma([0,t])$.
\no{\bf Case 1.} Suppose $A_{(0,1)}$ occurs and $\tau_0<\ha\tau_1$. Since $\ha\xi_1$ and $\HH^*(\xi_1)$ are subsets of $\HH^*(\ha\xi_1)\cup\ha\xi_1$, which is a connected subset of $(\HH\sem \gamma[0,\tau_0])\sem \rho_{\tau_0}$, from $\HH^*(\xi_1)\subset H_{\tau_0}^*(\rho_{\tau_0})$, we conclude that $\ha\xi_1\subset H_{\tau_0}^*(\rho_{\tau_0})$. Note that $\rho$ disconnects $\ha\xi_1$ from $\xi_0'$  in $\HH$, and intersects $\pa H_{\tau_0}$. Applying Lemma \ref{first-crosscut}, we get a sub-crosscut of $\rho$, denoted by $\rho_{\tau_0}'$, that disconnects $\ha\xi_1$ from $\xi_0'$ in $H_{\tau_0}$. Since both $\ha\xi_1$ and $\xi_0'$ lie in $H^*_{\tau_0}(\rho_{\tau_0})$, so does $\rho_{\tau_0}'$. Thus, $H^*_{\tau_0}(\rho_{\tau_0}')\subset H^*_{\tau_0}(\rho_{\tau_0})$. Since $\rho_{\tau_0}$ is the first sub-crosscut of $\rho$ in $H_{\tau_0}$ that disconnects $\xi_0'$ from $\infty$, we see that $\rho_{\tau_0}'$ does not disconnect $\xi_0'$ from $\infty$. Thus, $\xi_0'\subset H_{\tau_0}(\rho_{\tau_0}')$, and $\ha\xi_1\subset H_{\tau_0}^*(\rho_{\tau_0}')$ as $\rho_{\tau_0}'$ disconnects $\ha\xi_1$ from $\xi_0'$ in $H_{\tau_0}$. See Figure 1.

\begin{figure}
\labellist
\small
\pinlabel $\rho'_{\tau_0}$ at 300 205
\pinlabel $\xi_1$ at 150 265
\pinlabel $\ha{\xi}_1$ at 120 290
\pinlabel $\xi'_0$ at 405 193
\pinlabel $\xi_0$ at 405 228
\pinlabel $\rho$ at 405 270
\pinlabel $\rho_{\tau_0}$ at 450 65
\pinlabel $\ha{\xi_0}$ at 405 320
\pinlabel $0$ at 320 -15
\pinlabel $\gamma$ at 230 100
\endlabellist
\centering
\includegraphics[width=3in]{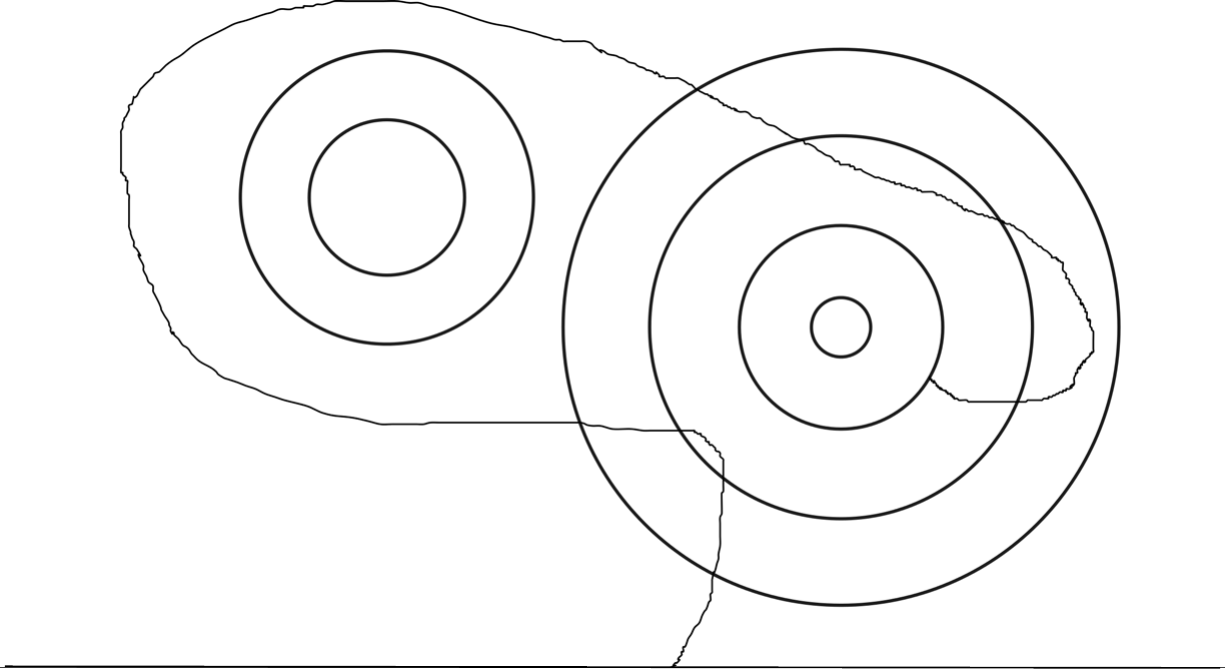}
\caption{This figure shows the event $A_{(0,1)}$ with $\gamma$ stopped at $\tau_0=\tau_{\xi_0}$.} \label{Fig2}
\end{figure}

Since $\HH^*(\xi_0)$ is a connected subset of $H_{\tau_0}\sem\rho_{\tau_0}'$, and contains $\xi_0'$ and a curve that approaches $\gamma(\tau_0)\in\xi_0$, we conclude that $H_{\tau_0}(\rho_{\tau_0}'; \gamma(\tau_0))=H_{\tau_0}(\rho_{\tau_0}'; \xi_0')=H_{\tau_0}(\rho_{\tau_0}')$. Thus, $H_{\tau_0}(\rho_{\tau_0}';\ha\xi_1) =H_{\tau_0}^*(\rho_{\tau_0}')$ is not a neighborhood of $\gamma^{\tau_0}(0)=\gamma(\tau_0)$ in $H_{\tau_0}$.
Since $\tau_0<\ha\tau_1$, $\ha\tau_1<\infty$ implies that the $SLE_\kappa$ curve $\gamma^{\tau_0}$ in $H_{\tau_0}$ (conditioned on $\F_{\tau_0}$) visits $\ha\xi_1$. Since $\ha\xi_0$ disconnects $\ha\xi_1$ from $\rho\supset \rho_{\tau_0}'$ in $\HH$, and intersects $\pa H_{\tau_0}$, from the boundary estimate v.3 (Lemma \ref{boundary-lem}) and (\ref{extremal}), we get
$$\PP[\ha\tau_1<\infty|\F_{\tau_0},A_{(0,1)},\tau_0<\ha\tau_1]\ \le C e^{-\alpha \pi d_{\HH}(\rho, \ha\xi_0)}\le   C \Big(\frac{r_0}{R_0}\Big)^{\alpha/4},$$
which together with (\ref{1-pt*}) implies that (\ref{conclusion*}) holds for $\iota=(0,1)$.

\vskip 3mm

\no{\bf Case 2.} Suppose for some $1\le j\le m-1$, $A_{(j,j+1)}$ occurs and $\tau_j<\ha\tau_{j+1}$. Using the argument in the previous case with $\tau_0$ and $\ha\xi_1$ replaced by $\tau_j$ and $\ha\xi_{j+1}$, respectively, we get  a sub-crosscut of $\rho$, denoted by $\rho_{\tau_j}'$, that disconnects $\ha\xi_{j+1}$ from $\xi_0'$ in $H_{\tau_j}$, and conclude that $H^*_{\tau_j}(\rho_{\tau_j}')\subset H^*_{\tau_j}(\rho_{\tau_j})$, $\xi_0'\subset H_{\tau_j}(\rho_{\tau_j}')$, and $\ha\xi_{j+1}\subset H_{\tau_j}^*(\rho_{\tau_j}')$.

Since $\HH^*(\xi_j) $ is a connected subset of $H_{\tau_j}\sem \rho_{\tau_j}$, and contains a curve that approaches $\gamma(\tau_j)\in \xi_j$, we conclude that $H_{\tau_j}(\rho_{\tau_j}; \gamma(\tau_j))=H_{\tau_j}(\rho_{\tau_j};\HH^*(\xi_j))=H_{\tau_j}(\rho_{\tau_j})$. Thus, $H_{\tau_j}^*(\rho_{\tau_j}')\subset H_{\tau_j}^*(\rho_{\tau_j})$ is not a neighborhood of $\gamma^{\tau_j}(0)=\gamma(\tau_j)$ in $H_{\tau_j}$.
Since $\tau_j<\ha\tau_{j+1}$, $\ha\tau_{j+1}<\infty$ implies that the $SLE_\kappa$ curve $\gamma^{\tau_j}$ in $H_{\tau_j}$ (conditioned on $\F_{\tau_j}$) visits $\ha\xi_{j+1}$. Since $\ha\xi_0$ disconnects $\ha\xi_{j+1}$ from $\rho\supset\rho_{\tau_j}'$ in $\HH$,  from Lemma \ref{boundary-lem} and (\ref{extremal}), we get
$$\PP[\ha\tau_{j+1}<\infty|\F_{\tau_j},A_{(j,j+1)},\tau_j<\ha\tau_{j+1}]\le  C e^{-\alpha \pi d_{\HH}(\rho, \ha\xi_0)}\le C\Big(\frac{r_0}{R_0}\Big)^{\alpha/4},$$
which together with (\ref{1-pt*}) implies that (\ref{conclusion*}) holds for $\iota=(j,j+1)$, $1\le j\le m-1$.

\vskip 3mm
\no{\bf Case 3.} Suppose $A_{(m,m+1)}$ and $\tau_m<\tau_{m+1}$ occur. Since $\HH^*(\xi_m) $ is a connected subset of $H_{\tau_m}\sem \rho_{\tau_m}$, and contains a curve that approaches $\gamma(\tau_m)\in\xi_m$, we conclude that $H_{\tau_m}(\rho_{\tau_m}; \gamma(\tau_m))=H_{\tau_m}(\rho_{\tau_m}; \HH^*(\xi_m))=H_{\tau_m}(\rho_{\tau_m})$. Thus, $H_{\tau_m}^*(\rho_{\tau_m})$ is not a neighborhood of $\gamma^{\tau_m}(0)=\gamma(\tau_m)$ in $H_{\tau_m}$. Since $\tau_m<\tau_{m+1}$, $\tau_{m+1}<\infty$ implies that the $SLE_\kappa$ curve $\gamma^{\tau_m}$ in $H_{\tau_m}$ (conditioned on $\F_{\tau_m}$) visits $\xi_0'\subset H_{\tau_m}^*(\rho_{\tau_m})$. Since $\xi_0$ disconnects $\xi_0'$ from $\rho$ in $\HH$, and intersects $\pa H_{\tau_m}$, we may apply Lemma \ref{boundary-lem} and (\ref{extremal}) to get
$$\PP[\tau_{m+1}<\infty|\F_{\tau_m},A_{(m,m+1)},\tau_m<\tau_{m+1}]\le C e^{-\pi d_{\HH}(\xi_0,\rho)}\le C\Big(\frac{r_0}{R_0}\Big)^{\alpha/4},$$
which together with (\ref{1-pt*}) implies that (\ref{conclusion*}) holds for $\iota=(m,m+1)$.

%Since $\ha\xi^{h_0,s}$ disconnects $\xi_0'$ from $\rho$ in $D$, and intersects $\pa H_{\tau_m}$, we may apply Lemma \ref{first-crosscut} to get a sub-crosscut of $\ha\xi^{h_0,s}$, denoted by $\rho_{\tau_m}''$, that disconnects $\xi_0'$ from $\rho_{\tau_m}\subset \rho$ in $H_{\tau_m}$. Since $\xi_0'$ lies in $H_{\tau_m}^*(\rho_{\tau_m})$, so is $\rho_{\tau_m}''$.
\vskip 3mm
\no{\bf Case 4.} Finally, we consider (\ref{conclusion*}) in the case $\iota=(j,j)$. Fix $1\le j\le m$ and define
$$\sigma_j=\inf\{t\ge \tau_{j-1}: \HH^*(\xi_j)\subset H_{t}^*(\rho_{t})\}.$$
From Lemma \ref{right-ctns-B} and the right-continuity of $(\F_t)$, we have
\begin{enumerate}
 \item Every $\sigma_j$ is an  $(\F_t)$-stopping time.
  \item If $\sigma_j<\infty$, then $\HH^*(\xi_j)\subset H_{\sigma_j}^*(\rho_{\sigma_j})$.
  \item If $A_{(j,j)}$ occurs, then $\tau_{j-1}< \sigma_j<\tau_j$.
  \item If $\tau_{j-1}< \sigma_j<\infty$, then $\gamma(\sigma_j)$ is an endpoint of ${\rho_{\sigma_j}}$.
\end{enumerate}
Note that the last property implies that $H_{\sigma_j}^*(\rho_{\sigma_j})$ is not a neighborhood of either $\gamma(\sigma_j)$ or $\infty$ in $H_{\sigma_j}$.
%Let $A_{(j,j)}'=\{\tau_{j-1}< \sigma_j<\tau_j\}\in \F_{\sigma_j}$. Then  $A_{(j,j)}\subset A_{(j,j)}$.
Let $F_<=\{\sigma_j<\ha\tau_j\}$ and $F_\ge=\{\ha\tau_j\le \sigma_j<\tau_j\}$. Then $A_{(j,j)}\subset F_<\cup F_\ge$.

\begin{figure}
	
		\labellist
		\small
		\pinlabel $\xi_0$ at 360 285
		\pinlabel $\rho$ at 420 335
		\pinlabel $\xi'_0$ at 400 255
		\pinlabel $\ha{\xi}_j$ at 260 500
		\pinlabel $\ha{\xi}_{j+1}$ at 195 290
		\pinlabel $\rho'_{\tau_j}$ at 285 220
		\pinlabel $\xi_j$ at 190 485
		\pinlabel $\xi_{j+1}$ at 120 255
		\pinlabel $\ha{\xi}_0$ at 440 380
		\pinlabel $0$ at 290 15
		\pinlabel $\rho_{\tau_j}$ at 400 85
		\pinlabel $\gamma$ at 380 460
		\endlabellist
	\centering
	\includegraphics[width=3in]{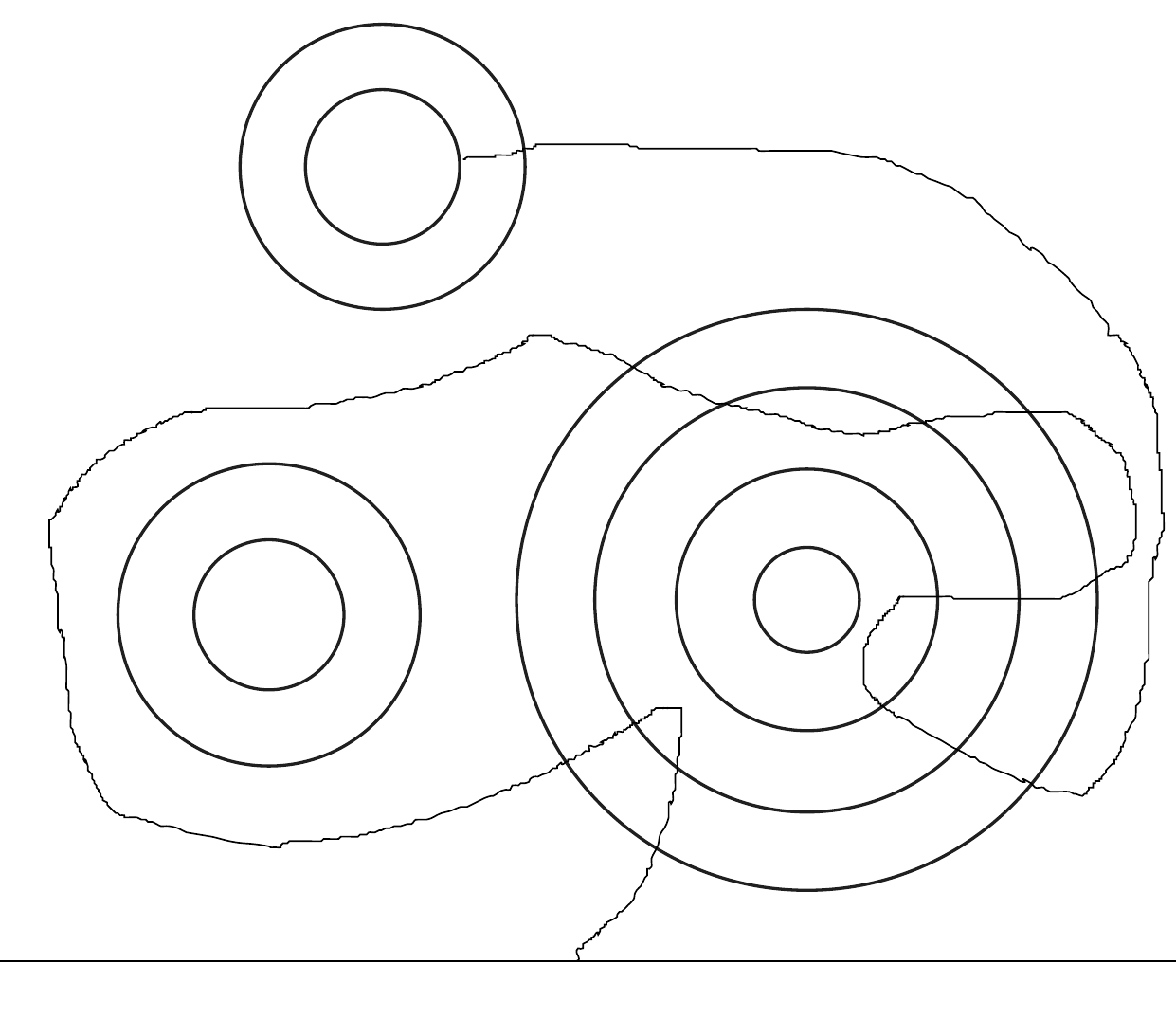}
	\caption{This figure shows the event $A_{(j,j+1)}$ with $\gamma$ stopped at $\tau_j=\tau_{\xi_j}$. }
\end{figure}

%Define a number of events:
%$$F_0=\{\dist(z_0,\gamma([0,\sigma_j]))\in [e^{-1},\infty)\}\in\F_{\sigma_j};$$
%$$F_k=\{ \tau_{j-1}, \tau_{\ha\xi^{h_j,k-1}}\le \sigma_j< \tau_{\ha\xi^{h_j,k}}\}\in\F_{\sigma_j},\quad 1\le k\le l_j.$$
%Let $F_{\le a_j}=\bigcup_{k\le a_j} F_k$. Then $A_{(j,j)}\subset F_{\le a_j}\cup \bigcup_{k=a_j+1}^{b_j} F_k$.
\vskip 3mm
\no{\bf Case 4.1.} Suppose $F_\ge$ occurs. Let $N=\lceil \log(R_j/r_j)\rceil\in\N$. Let $\zeta_k=\{z\in\HH:|z-z_j|=(R_j^{N-k} r_j^k)^{1/N}\}$, $0\le k\le N$. Note that $\zeta_0=\ha\xi_j$ and $\zeta_N=\xi_j$. Then $F_{\ge }\subset \bigcup_{k=1}^N F_k$, where
$$F_k:=\{\tau_{\zeta_{k-1}}\le \sigma_j<\tau_{\zeta_k}\},\quad 1\le k\le N.$$
If $F_k$ occurs, then $\zeta_k\subset H_{\sigma_j}^*(\rho_{\sigma_j})$ because $\HH^*(\zeta_k)\cup\zeta_k$ is a connected subset of $(\HH\sem \gamma[0,\sigma_j])\sem \rho$ that contains both $\zeta_k$ and $\HH^*(\xi_j)$, and $\HH^*(\xi_j)\subset H_{\sigma_j}^*(\rho_{\sigma_j})$. See Figure 2.

From Lemma \ref{boundary-lem} and (\ref{extremal}), we get
$$\PP[\tau_{\zeta_k}<\infty|\F_{\sigma_j},F_k]\le C e^{-\alpha\pi d_{\HH}(\rho,\zeta_{k-1})}\le  C e^{-\alpha\pi (d_{\HH}(\rho,\ha\xi_0)+d_{\HH}(\zeta_0,\zeta_{k-1}))}\le C \Big(\frac{r_0}{R_0}\Big)^{\alpha/4} \Big(\frac{r_j}{R_j}\Big)^{\frac\alpha2 \frac{k-1}N}. $$
From Lemma \ref{one-point-lemma}, we get
$$ \PP[F_k|\F_{\tau_{j-1}}, \tau_{j-1}<\ha\tau_j]\le C \frac{P_{y_j}((R_j^{N-k+1} r_j^{k-1})^{1/N})}{P_{y_j}(R_j)}.$$ %\label{Fk}\EDE
$$\PP[\tau_j<\infty |\F_{\tau_{\zeta_k}},F_k]\le C \frac{P_{y_j}(r_j)}{P_{y_j}((R_j^{N-k} r_j^{k})^{1/N})}.$$
The above three displayed formulas together with (\ref{P-compare}) imply that
$$ \PP[\tau_j<\infty,F_k|\F_{\tau_{j-1}},\tau_{j-1}<\ha\tau_j]\le  C \Big(\frac{r_0}{R_0}\Big)^{\alpha/4} \Big(\frac{r_j}{R_j}\Big)^{\frac\alpha2 \frac{k-1}N}\Big(\frac{r_j}{R_j}\Big)^{-\alpha/N}\frac{P_{y_j}(r_j)}{P_{y_j}(R_j)} .$$ %\label{Fk*}\EDE
Since $F_{\ge }\subset \bigcup_{k=1}^N F_k$, by summing up the above inequality over $k$, we get
\BGE \PP[\tau_j<\infty,F_{\ge} |\F_{\tau_{j-1}},\tau_{j-1}<\ha\tau_j]\le  C \Big(\frac{r_0}{R_0}\Big)^{\alpha/4} \frac{P_{y_j}(r_j)}{P_{y_j}(R_j)}\left[\Big(\frac{r_j}{R_j}\Big)^{-\alpha/N} \frac{1-(\frac{r_j}{R_j})^{\alpha/2}}{1-(\frac{r_j}{R_j})^{\alpha/(2N)}}\right].
\label{Fk*}\EDE
By considering the cases $R_j/r_j\le e$ and $R_j/r_j>e$ separately, we see that the quantity inside the square bracket is bounded by the constant $\frac{e^\alpha}{1-e^{-\alpha/4}}$.
\vskip 3mm

\no{\bf Case 4.2.} Suppose $F_{<}$ occurs. % Then $\sigma_j<\ha\tau_j$.
%From the assumptions on $(\ha\xi^{j,k})$, we get $$\PP[E^{j}|\F_{\sigma_j},F_{\le a_j},E^{j-1}]\le C \prod_{k=a_{j}+1}^{b_{j}} P_{h_{j},k}.$$
Then $\HH^*(\ha\xi_j)\cup\ha\xi_j$ is a connected subset of $(\HH\sem \gamma[0,\sigma_j])\sem \rho$ that contains $\HH^*(\xi_j)$. So we get $\ha\xi_j\subset H_{\sigma_j}^*(\rho_{\sigma_j};\HH^*(\xi_j)= H_{\sigma_j}^*(\rho_{\sigma_j})$. Since $\ha\xi_0$ disconnects $\rho$ from $\ha\xi_j$ in $\HH$, applying Lemma \ref{boundary-lem} and (\ref{extremal}), we get
$$\PP[\ha\tau_j <\infty|\F_{\sigma_j},F_{<}]\le C e^{-\alpha \pi d_{\HH}(\rho, \ha\xi_0)}\le C\Big(\frac{r_0}{R_0}\Big)^{\alpha/4}, $$
which together with (\ref{1-pt*}) implies that
\BGE \PP[\tau_j<\infty, F_{<}|\F_{\tau_{j-1}}]\le C \Big(\frac{r_0}{R_0}\Big)^{\alpha/4} \frac{P_{y_j}(r_j)}{P_{y_j}(R_j)}.\label{F0*}\EDE

Combining (\ref{Fk*}) and (\ref{F0*}),  we get
$$\PP[\tau_j<\infty, A_{(j,j)}|\F_{\tau_{j-1}},\tau_{j-1}<\ha\tau_j]\le C \Big(\frac{r_0}{R_0}\Big)^{\alpha/4} \frac{P_{y_j}(r_j)}{P_{y_j}(R_j)},$$
which together with (\ref{1-pt*}) implies that (\ref{conclusion*}) holds for $\iota=(j,j)$, $1\le j\le m$.
\end{proof}

\begin{figure}
	\labellist
	\small
	\pinlabel $\xi_0$ at 355 230
	\pinlabel $\rho$ at 355 280
	\pinlabel $\ha{\xi}_0$ at 355 350
	\pinlabel $\xi_{j-1}$ at 200 535
	\pinlabel $\ha{\xi}_{j-1}$ at 200 610
	\pinlabel $\zeta_k$ at 515 540
	\pinlabel $\xi_j$ at 535 500
	\pinlabel $\zeta_{k-1}$ at 510 600
	\pinlabel $\ha{\xi}_j$ at 470 650
	\pinlabel $0$ at 375 -15
	\pinlabel $\rho_{\sigma_j}$ at 460 150
	\pinlabel $\gamma$ at 200 300
	\endlabellist
	\centering
	\includegraphics[width=3.5in]{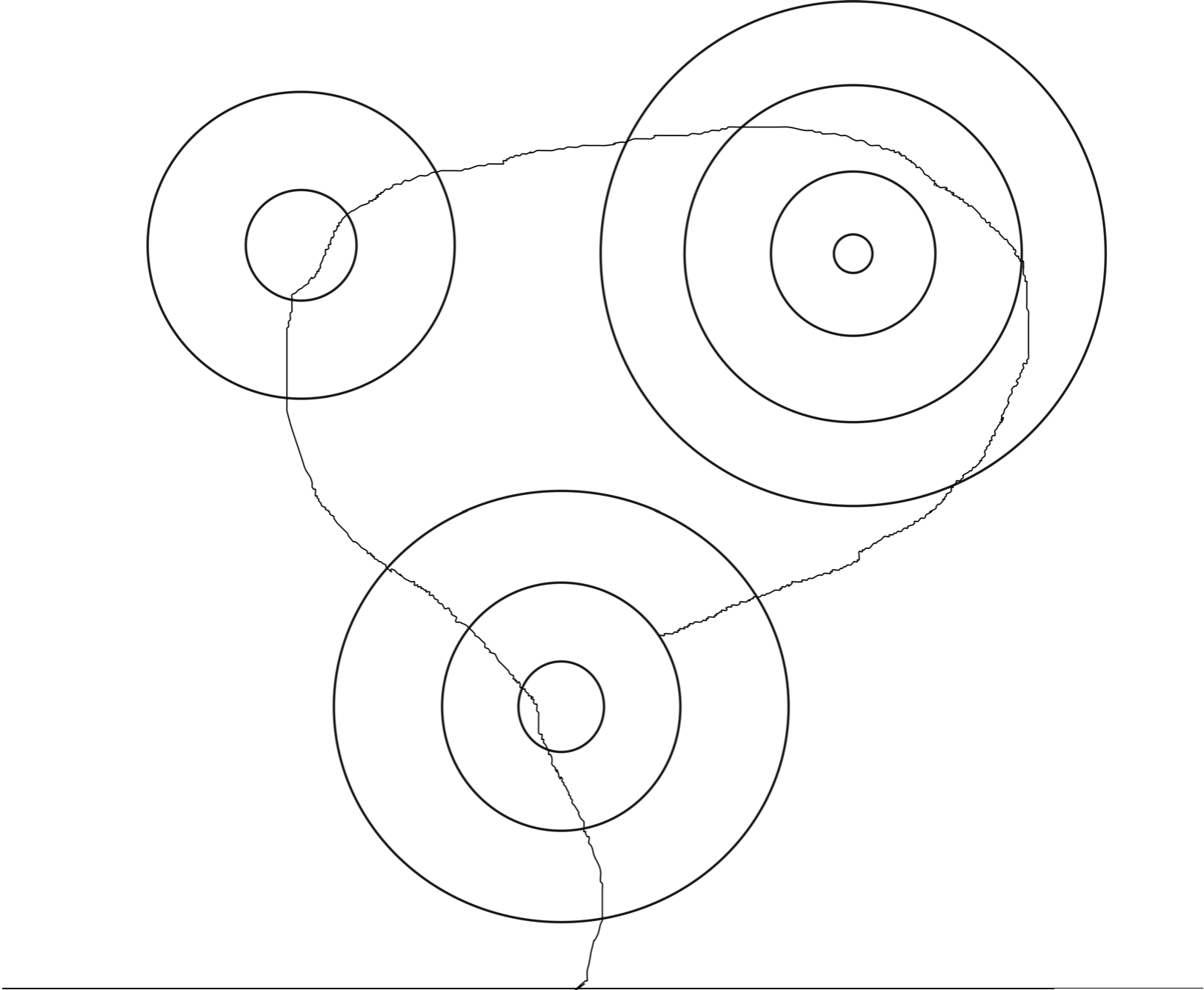}
	\caption{This figure shows the event $F_k$, a sub event of $A_{(j,j)}$,  with $\gamma$ stopped at $\sigma_j$, the first time after $\tau_{j-1}=\tau_{\xi_{j-1}}$ that $\xi_j$ lies in the bounded component of $H_t\sem \rho_t$.}  \label{fig1}
\end{figure}

%%%%%%%%%%%%%%%%%%%%%%%%%%%%%%%%%%%%%%%%%%%%%%%%%%%%%%%%%%%5

\begin{comment}
Let $\Xi=\{\Xi_e:e\in{\cal E}\}$ be a family with the following properties:
 \begin{enumerate}
   \item For each $e\in\cal E$, $\Xi_e=\{\xi^e_j:0\le j\le N_e\}$, where $N_e:=|\Xi_e|-1\ge 0$, and there exist $z_e\in\lin\HH$ and $R_e\in(0,|z_e|)$ such that $\xi^e_j=\{z\in\HH:|z-z_e|=R_e/4^j\}$, $0\le j\le N_e$.
   We let $r_e=R_e/4^{N_e}$ denote the smallest radius of elements in $\Xi_e$, and $y_e=\Imm z_e\ge 0$.
   \item The closed annuli $A_e:=\{r_e\le |z-z_e|\le R_e\}$, $e\in\cal E$, are mutually disjoint.
 \end{enumerate}
Let $\Xi=\bigcup_{e\in\cal E} \Xi_e$. Then the elements of $\Xi$ are mutually disjoint. Define a partial order on $\Xi$ such that $\xi_1<\xi_2$  iff $\HH^*(\xi_1)\supsetneqq \HH^*(\xi_2)$. One should keep in mind that a smaller element in $\Xi$ has bigger radius, but will be visited earlier (if it happens) by a curve in $\HH$ started from $0$.
\end{comment}
%%%%%%%%%%%%%%%%%%%%%%%%%%%%%%%%%%%%%%%%%%%%%%%%%%%%%%%%%%%%%%%%%%%%%%%%%%%

Let $\Xi$ be a family of mutually disjoint circles with center in $\lin\HH$, each of which does not pass through or enclose $0$. Define a partial order on $\Xi$ such that $\xi_1<\xi_2$ if $\xi_2$ is enclosed by $\xi_1$. One should keep in mind that a smaller element in $\Xi$ has bigger radius, but will be visited earlier (if it happens) by a curve started from $0$.

Suppose that $\Xi$ has a partition $\{\Xi_e\}_{e\in\cal E}$ with the following properties:
\begin{enumerate}
  \item For each $e\in\cal E$, the elements in $\Xi_e$ are concentric circles with radii forming a geometric sequence with common ratio $1/4$.
  We denote the common center $z_e$, the biggest radius $R_e$, and the smallest radius $r_e$.
  \item Let $A_e=\{r_e\le |z-z_0|\le R_e\}$ be the closed annulus associated with $\Xi_e$, which is a single circle if $R_e=r_e$, i.e., $|\Xi_e|=1$. Then the annuli $A_e$, $e\in\cal E$, are mutually disjoint.
\end{enumerate}
Note that every $\Xi_e$ is a totally ordered set w.r.t.\ the partial order on $\Xi$.

\begin{Theorem}
  Let $y_e:=\Imm z_e\ge 0$, $e\in\cal E$. Then there is $C_{|\cal E|}<\infty$, which depends only on $\kappa$ and $|\cal E|$, such that
  $$\PP\Big[\bigcap_{\xi\in\Xi}\{\gamma\cap \xi\ne\emptyset\}\Big]\le C_{|\cal E|}\prod_{e\in\cal E} \frac{P_{y_e}(r_e)}{P_{y_e}(R_e)}.$$ \label{key-lem2}
%  where $y_e:=\Imm z_e\ge 0$ and $R_e:=R_{\min\Xi_e}\ge R_{\max\Xi_e}=:r_e$.
\end{Theorem}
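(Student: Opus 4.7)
I would prove Theorem \ref{key-lem2} by strong induction on $|\cal E|$, with Theorem \ref{key-lem} as the workhorse at each step.

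\emph{Reductions.} First, for each group $e$, one has $\bigcap_{\xi\in\Xi_e}\{\gamma\cap\xi\neq\emptyset\}=\{\gamma\cap\xi^e_{\min}\neq\emptyset\}$, because $\gamma$ starts outside $\ha D_e$ and reaching the innermost concentric circle forces a crossing of every larger circle of $\Xi_e$. So the event of interest is $\bigcap_{e\in\cal E}\{\gamma\cap\xi^e_{\min}\neq\emptyset\}$. Second, from the pairwise disjointness of the annuli $A_e$, the connectedness of each $A_{e'}$, and the fact that $\ha\xi^e\subset A_e$, one derives the dichotomy: for any two groups $e\neq e'$, either $\ha D_e\cap\ha D_{e'}=\emptyset$, or one annulus lies strictly inside the other's open inner disk $\{|z-z_\bullet|<r_\bullet\}$. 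This equips $\cal E$ with a forest structure (a child of $e$ is any $e'$ with $A_{e'}\subset\{|z-z_e|<r_e\}$ and no intermediate group).

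\emph{Induction.} The base case $|\cal E|=1$ is Lemma \ref{one-point-lemma}. For the inductive step with $|\cal E|=n\ge 2$, I fix a root $e_0$ of the forest. The remaining groups split into ``siblings'' (disks disjoint from $\ha D_{e_0}$) and ``descendants'' (inside $\{|z-z_{e_0}|<r_{e_0}\}$). Label the concentric circles of $\Xi_{e_0}$ in decreasing radius as $\xi^{e_0}_0,\ldots,\xi^{e_0}_N$; they are visited in this strict order and partition $\gamma$'s timeline into $N+1$ slices, with descendants necessarily confined to the final slice (since reaching any descendant requires crossing $\xi^{e_0}_{\min}$ first). I decompose the event $E$ by summing over ordered partitions of the siblings across these slices.

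\emph{Applying Theorem \ref{key-lem}.} In each slice, I apply Theorem \ref{key-lem} with the slice's bounding pair $(\xi^{e_0}_i,\xi^{e_0}_{i+1})$ playing the role of the main group (in the degenerate form $\ha\xi_0=\xi_0$ with $\xi_0'=\xi^{e_0}_{i+1}$, so the $(r_0/R_0)^{\alpha/4}$ factor disappears) and the outer/innermost pair of each assigned sibling in the role of some $(\ha\xi_j,\xi_j)$. Chaining slice bounds via DMP, the sibling factors collect into $\prod_{\text{sib}}P_{y_e}(r_e)/P_{y_e}(R_e)$, and the one-point estimates applied to each main transition $\xi^{e_0}_i\to\xi^{e_0}_{i+1}$ telescope into the main factor $P_{y_{e_0}}(r_{e_0})/P_{y_{e_0}}(R_{e_0})$. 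After $\tau_{\xi^{e_0}_{\min}}$, DMP yields an SLE in the subdomain $H_{\tau_{\xi^{e_0}_{\min}}}$ which must visit the descendant subtree; the inductive hypothesis applied there supplies $\prod_{\text{desc}}P_{y_e}(r_e)/P_{y_e}(R_e)$. The finite number of orderings and sibling-to-slice assignments absorbs into $C_n$.

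\emph{Obstacle.} The main difficulty is threefold: verifying that Theorem \ref{key-lem}'s hypothesis $\ha D_0\cap\ha D_j=\emptyset$ persists in the SLE subdomain after DMP conditioning at each slice boundary (it should be inherited from the original geometric dichotomy); correctly extracting the main factor $P_{y_{e_0}}(r_{e_0})/P_{y_{e_0}}(R_{e_0})$ via the telescoping one-point estimates on $\Xi_{e_0}$ without any loss against the degenerate-main bound of Theorem \ref{key-lem}; and ensuring the combinatorial sum over sibling-to-slice assignments and orderings stays bounded by a constant depending only on $n=|\cal E|$.
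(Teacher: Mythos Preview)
Your proposal has a genuine gap, and it is exactly the third ``obstacle'' you flagged. When you apply Theorem \ref{key-lem} in the degenerate form $\ha\xi_0=\xi_0$ so that the factor $(r_0/R_0)^{\alpha/4}$ disappears, you lose the only mechanism that can control the combinatorial sum. The number of time slices produced by $\Xi_{e_0}$ is $N+1$ with $N=|\Xi_{e_0}|-1=\log_4(R_{e_0}/r_{e_0})$, which is \emph{not} bounded in terms of $|\cal E|$; hence the number of sibling-to-slice assignments is of order $(N+1)^{|\cal E|-1}$ and cannot be absorbed into a constant $C_{|\cal E|}$. There is a second, related problem: a sibling's outer circle $\ha\xi^{e'}$ may have been hit in an earlier slice than its inner circle $\xi^{e'}_{\min}$, so the ordering hypothesis $\tau_{\xi_0}<\tau_{\ha\xi_1}$ of Theorem \ref{key-lem} need not hold for the pair $(\ha\xi^{e'},\xi^{e'}_{\min})$ within the slice you assign it to.

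The paper's proof resolves both issues simultaneously, and without induction or a forest structure. It decomposes over all order-preserving visit sequences $\sigma$ of the circles in $\Xi$; for each $\sigma$ it partitions every $\Xi_e$ into maximal $\sigma$-consecutive runs $\Xi_{(e,j)}$. Crucially, when applying Theorem \ref{key-lem} to an excursion away from $\Xi_{e_0}$ between runs $j$ and $j+1$, it takes $\ha\xi_0=\min\Xi_{e_0}$ (the outermost circle of the whole group) and $\xi_0=\max\Xi_{(e_0,j)}$, so that $(r_0/R_0)^{\alpha/4}=4^{-\alpha(s_{e_0}(j+1)-1)/4}$ records how deep into $\Xi_{e_0}$ the curve had gone before leaving. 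After a geometric average over $e_0\in\cal E$, this produces a decay factor $4^{-\frac{\alpha}{4|\cal E|}\sum_e\sum_j s_e(j)}$ that makes the sum over all run structures $(M_e,(s_e(j)))$ converge to a constant depending only on $\kappa$ and $|\cal E|$. In short, the non-degenerate use of Theorem \ref{key-lem}---keeping $\ha\xi_0$ at the outermost circle so as to retain $(r_0/R_0)^{\alpha/4}$---is the essential idea your scheme is missing.
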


\no{\bf Discussion.} Suppose $\gamma$ visits all $\xi\in\Xi$. For $\xi_1,\xi_2\in\Xi$, if $\xi_1<\xi_2$, then $\gamma$ will visit $\xi_1$ before $\xi_2$. Other than these constraints, $\gamma$ can visit the elements in $\Xi$ in any order. The simplest case is that $\gamma$ does not jump back and forth between different groups $\{\Xi_e:e\in\cal E\}$. This means that $\gamma$ first visits all circles in $\Xi_{e_1}$ for some $e_1\in\cal E$ before all other circles in $\Xi$, then visits all circles in $\Xi_{e_2}$ for some $e_2\in{\cal E}\sem\{e_1\}$ before circles in $\Xi\sem(\Xi_{e_1}\cup\Xi_{e_2})$, and so on. In this case, we can easily use the $1$-point estimate and DMP to get the righthand side of the above formula. We use Theorem \ref{key-lem} to deal with the general cases. The key point is that $\gamma$ has to pay a price to  jump back and forth between different $\Xi_e$'s due to the factor $(\frac{r_0}{R_0})^{\alpha/4}$ given in Theorem \ref{key-lem}.

\begin{proof} We write $\N_n$ for $\{k\in\N:k\le n\}$.
  Let $S$ denote the set of bijections $\sigma:\N_{|\Xi|}\to \Xi$ such that $\xi_1<\xi_2$ implies that $\sigma^{-1}(\xi_1)<\sigma^{-1}(\xi_2)$. Let $E=\bigcap_{\xi\in\Xi}\{\gamma\cap \xi\ne\emptyset\}$ and
  $$E^\sigma=\{\tau_{\sigma(1)}<\tau_{\sigma(2)}<\cdots<\tau_{\sigma(|\Xi|)}<\infty\},\quad \sigma\in S.$$
 Then the above discussion gives
  \BGE E=\bigcup_{\sigma\in S} E^\sigma.\label{E-sigma}\EDE
We will derive an upper bound of $\PP[E^\sigma]$ in (\ref{E-sigma-est}).

  Fix $\sigma\in S$. For $e\in\cal E$, we label the elements of $\Xi_e$ by $\xi^e_0<\cdots<\xi^e_{N_e}$, where $N_e=|\Xi_e|-1$.
  Let
  $$J_e=\{1\le n\le {N_e}: \sigma^{-1}(\xi^e_{n})>\sigma^{-1}(\xi^e_{n-1})+1\}\cup\{0\},$$
In plain words,  $n\in J_e$ means that either $n=0$ or after visiting $\xi^e_{n-1}$, $\gamma$ does not immediately visit $\xi^e_n$ without visiting other circles in $\Xi$ that it has not visited before. In the latter case, after visiting $\xi^e_{n-1}$, $\gamma$ visits the circles in $\bigcup_{e'\ne e}\Xi_{e'}$ before $\xi^e_n$.

Order the elements of $J_e$ by $0=s_e(0)<\cdots<s_e(M_e)$, where $M_e=|J_e|-1$. Set $s_e(M_e+1)=N_e+1$.
  Every $\Xi_e$ can be partitioned into $M_e+1$  subsets:
  $$\Xi_{(e,j)}=\{\xi^e_n:s_e(j)\le n\le s_e(j+1)-1\},\quad 0\le j\le M_e.$$
  The meaning of the partition is that, after $\gamma$ visits the first element in $\Xi_{(e,j)}$, which must be $\xi^e_{s_e(j)}$, it then visits all elements in $\Xi_{(e,j)}$ without visiting any other circles in $\Xi$ that it has not visited before.
  Let $I=\{(e,j):e\in{\cal E}, 0\le j\le M_e\}$. Then $\{\Xi_{\iota}:\iota\in I\}$ is another partition of $\Xi$, which is finer than $\{\Xi_e:e\in\cal E\}$. Note that every $\sigma^{-1}(\Xi_\iota)$, $\iota\in I$, is a connected subset of $\Z$.

  For $\iota\in I$, let $e_\iota$ denote the first coordinate of $\iota$, $z_\iota=z_{e_\iota}$ and $y_\iota=\Imm z_\iota$. Let $P_\iota= \frac{P_{y_{\iota}}(R_{\max \Xi_\iota})}{P_{y_{\iota}}(R_{\min \Xi_\iota})}$. Recall that if $\iota=(e,j)$, $\min \Xi_\iota=\xi^e_{s_e(j)}$ and $\max \Xi_\iota=\xi^e_{s_e(j+1)-1}$. From Lemma \ref{one-point-lemma} we get
  \BGE \PP[\tau_{\max\Xi_\iota}<\infty|\F_{\min\Xi_\iota}]\le C P_\iota,\quad \iota\in I.\label{P-iota}\EDE
  Let $P_e= \frac{P_{y_e}(r_e)}{P_{y_e}(R_e)}$, $e\in\cal E$. From (\ref{P-compare}) we get
  \BGE \prod_{j=0}^{M_e} P_{(e,j)}\le 4^{\alpha M_e} P_e,\quad e\in\cal E.\label{P-iota<}\EDE

  We have $|I|=\sum_{e\in\cal E}(M_e+1)$. Considering the order that $\gamma$ visits $\Xi_\iota$, $\iota\in I$, we get a bijection map $\ha\sigma:\N_{|I|}\to  I$ such that $n_1<n_2$ implies that  $\max\sigma^{-1}(\Xi_{\ha\sigma(n_1)})<\min\sigma^{-1}(\Xi_{\ha\sigma(n_2)})$, and $n_1=n_2-1$ implies that $\max\sigma^{-1}(\Xi_{\ha\sigma(n_1)})=\min\sigma^{-1}(\Xi_{\ha\sigma(n_2)})-1$. We may now express $E^\sigma$ as
  $$E^\sigma=\{\tau_{\min\Xi_{\ha\sigma(1)}}<\tau_{\max\Xi_{\ha\sigma(1)}}<\tau_{\min\Xi_{\ha\sigma(2)}}<\tau_{\max\Xi_{\ha\sigma(2)}}<\cdots<
  \tau_{\min\Xi_{\ha\sigma(|I|)}}<\tau_{\max\Xi_{\ha\sigma(|I|)}}<\infty\}.$$

Fix $e_0\in\cal E$. Let $n_j=\ha\sigma^{-1}((e_0,j))$, $0\le j\le M_{e_0}$. Then $n_{j+1}\ge n_{j}+2$, $0\le j\le M_{e_0}-1$. Fix $0\le j\le M_{e_0}-1$. Let $m=n_{j+1}-n_{j}-1$. Applying Theorem \ref{key-lem} to $\ha\xi_0=\min\Xi_{e_0}$, $\xi_0=\max\Xi_{({e_0},j)}=\max\Xi_{\ha\sigma(n_{j})}$, $\xi'_0= \min\Xi_{({e_0},j+1)}=\min\Xi_{\ha\sigma(n_{j+1})}$, $\ha\xi_k=\min\Xi_{\ha\sigma(n_{j}+k)}$ and $\xi_k=\max\Xi_{\ha\sigma(n_j+k)}$, $1\le k\le m$, we get
$$\PP[E^\sigma_{[\max\Xi_{\ha\sigma(n_{j})},\min\Xi_{\ha\sigma(n_{j+1})}]}|\F_{\tau_{\max\Xi_{\ha\sigma(n_{j})}}}]\le C^m 4^{-\alpha/4(s_{e_0}(j+1)-1)} \prod_{n=n_{j}+1}^{n_{j+1}-1}P_{\ha\sigma(n)},$$
where $E^\sigma_{[\max\Xi_{\ha\sigma(n_{j})},\min\Xi_{\ha\sigma(n_{j+1})}]}$ is the $\F_{\tau_{\min\Xi_{\ha\sigma(n_{j+1})}}}$-measurable event
$$\{\tau_{\max\Xi_{\ha\sigma(n_{j})}}<\tau_{\min\Xi_{\ha\sigma(n_{j}+1)}}<\tau_{\max\Xi_{\ha\sigma(n_{j}+1)}}<\cdots
<\tau_{\max\Xi_{\ha\sigma(n_{j}+m)}}<\tau_{\min\Xi_{\ha\sigma(n_{j+1})}}<\infty\}.$$

Letting $j$ vary between $0$ and $M_{e_0}-1$ and using (\ref{P-iota}) and  we get
$$\PP[E^\sigma]\le C^{|I|} 4^{-\alpha/4 \sum_{j=1}^{M_{e_0}} (s_{e_0}(j)-1)} \prod_{\iota\in I} P_\iota.$$
Using (\ref{P-iota<}) and $|I|=\sum_e (M_e+1)$, we find that the right-hand side is bounded by
$$C^{|{\cal E}|} C^{\sum_{e\in\cal E}M_e}4^{-\frac\alpha 4 \sum_{j=1}^{M_{e_0}} s_{e_0}(j)}\prod_{e\in\cal E} P_e,$$
Taking a geometric average over $e_0\in\cal E$, we get
\BGE \PP[E^\sigma]\le   C^{|{\cal E}|} C^{\sum_{e\in\cal E}M_e}4^{-\frac\alpha{4|{\cal E}|}\sum_{e\in\cal E} \sum_{j=1}^{M_{e}} s_{e}(j)}\prod_{e\in\cal E} P_e.\label{E-sigma-est}\EDE

So far we have omitted the $\sigma$ on $I$, $M_e$, $s_e(j)$ and etc; we will put $\sigma$ on the superscript if we want to emphasize the dependence on $\sigma$. From (\ref{E-sigma}) and the above result, it follows that
\BGE \PP[E]\le C^{|{\cal E}|} \sum_{(M_e;(s_e(j))_{j=0}^{M_e})_{e\in\cal E}}|S_{(M_e,(s_e(j)))}|  C^{\sum_{e\in\cal E}M_e}4^{-\frac\alpha{4|{\cal E}|}\sum_{e\in\cal E} \sum_{j=1}^{M_{e}} s_{e}(j)}\prod_{e\in\cal E} P_e,\label{PE-S}\EDE
where
$$S_{(M_e,(s_e(j)))}:=\{\sigma\in S:M^\sigma_e=M_e, s^\sigma_e(j)=s_e(j), 0\le j\le M_e,e\in\cal M\},$$
and the first summation in (\ref{PE-S}) is over all possible $(M_e;(s_e(j))_{j=0}^{M_e})_{e\in\cal E}$, namely, $M_e\ge 0$ and  $0=s_e(0)<s_e(1)<\cdots s_e(M_e)\le N_e$ for every $e\in\cal E$. It now suffices to show that
 \BGE \sum_{(M_e;(s_e(j))_{j=1}^{M_e})_{e\in\cal E}}|S_{(M_e,(s_e(j)))}|  C^{\sum_{e\in\cal E}M_e}4^{-\frac\alpha{4|{\cal E}|}\sum_{e\in\cal E} \sum_{j=1}^{M_{e}} s_{e}(j)} \le C_{|\cal E|},\label{suffice}\EDE
for some $C_{|\cal E|}<\infty$ depending only on $|\cal E|$ and $\kappa$.

We now bound the size of $S_{(M_e,(s_e(j)))}$. Note that $M^\sigma_e$ and $s^\sigma_e(j)$, $0\le j\le M^\sigma_e$, $e\in\cal E$, determine the partition $\Xi_\iota$, $\iota\in I^\sigma$, of $\Xi$. When the partition is given, $\sigma$ is then determined by $\ha\sigma:\N_{|I^\sigma|}\to  I^\sigma$, which is in turn determined by $e_{\ha\sigma(n)}$, $1\le n\le |I^\sigma|=\sum_{e\in\cal E}(M^\sigma_e+1)$, because if $e_{\ha\sigma(n)}=e_0$, then $\ha\sigma(n)=(e_0,j_0)$, where $j_0=\min\{0\le j\le M_{e_0}: (e_0,j)\not\in \ha\sigma(m), m<n\}$. Since each $e_{\ha\sigma(n)}$ has at most $|\cal E|$ possibilities, we have $|S_{(M_e,(s_e(j)))}|\le |{\cal E}|^{\sum_{e\in\cal E}(M_e+1)}$.
Thus, the left-hand side of (\ref{suffice}) is bounded by
$$ |{\cal E}|^{|{\cal E}|} \sum_{(M_e;(s_e(j))_{j=0}^{M_e})_{e\in\cal E}} \prod_{e\in\cal E}   (C|{\cal E}|)^{ M_e}4^{-\frac\alpha{4|{\cal E}|}  \sum_{j=1}^{M_{e}} s_{e}(j)}$$
$$=|{\cal E}|^{|{\cal E}|} \prod_{e\in\cal E} \sum_{M_e=0}^{N_e}  (C|{\cal E}|)^{ M_e} \sum_{0=s_e(0)<\cdots<s_e(M_e)\le N_e}4^{-\frac\alpha{4|{\cal E}|}  \sum_{j=1}^{M_{e}} s_{e}(j)}$$
$$\le |{\cal E}|^{|{\cal E}|} \prod_{e\in\cal E} \sum_{M=0}^{\infty}   (C|{\cal E}|)^{ M} \sum_{s(1)=1}^\infty\cdots  \sum_{s(M)=M}^\infty
4^{-\frac\alpha{4|{\cal E}|}  \sum_{j=1}^{M} s(j)}$$
$$\le  |{\cal E}|^{|{\cal E}|} \prod_{e\in\cal E} \sum_{M=0}^{\infty}   (C|{\cal E}|)^{ M}\prod_{j=1}^M \sum_{s(j)=j}^\infty
4^{-\frac\alpha{4|{\cal E}|}  s(j)}
=  \left[|{\cal E}| \sum_{M=0}^{\infty}   \left(\frac{C|{\cal E}|}{1-4^{-\frac\alpha{4|{\cal E}|} }}\right)^{ M} 4^{-\frac\alpha{8|{\cal E}|}  M(M+1)} \right]^{|{\cal E}|}.$$
The conclusion now follows since the summation inside the square bracket equals to a finite number depending only on $\kappa$ and $|\cal E|$.
\end{proof}

\section{Proofs of the Main Theorems}
First, we are going to use Theorem \ref{key-lem2} to prove Theorem \ref{mainthm2}. What we need to do in the proof is to use the radii $r_j$'s and the distances $l_j$'s to construct a group of circles $\Xi$ and a partition $\Xi_e$, $e\in\cal E$, that satisfy the conditions in Section \ref{Mainsec}, and prove that the upper bound given by Theorem \ref{key-lem2} is comparable to the upper bound in Theorem \ref{mainthm2}.

\begin{proof} [Proof of Theorem \ref{mainthm2}]
  We assume that any $r_j$ is of the form $\frac{l_j}{4^{h_j}}$ for some integer $h_j$. If not, it is between two of them and by changing $C_n$ in the theorem and using (\ref{P-compare}) we can get the result easily. Also we can assume $h_j\ge 1$ for every $j$ because otherwise the corresponding term on right-hand side i.e $\frac{P_{y_j}(r_j\wedge l_j)}{P_{y_j}(l_j)}$ is 1 so we can just ignore it. We want to deduce this theorem from Theorem \ref{key-lem2}, so we want to construct a family  $\Xi$. Consider
\[
\xi_j^s=\{|z-z_j|=\frac{l_j}{4^s}\}, \quad 1\le j\le n,\quad 1\le s\le h_j.
\]
The family $\{\xi_j^s:1\le j\le n,\quad 1\le s\le h_j\}$ may not be mutually disjoint. To solve this issue, we will remove some circles as follows. For $1\le j<k\le n$, let $D_k=\{|z-z_k|\le l_k/4\}$, which contains every $\xi_k^r$, $1\le r\le h_k$, and
\BGE I_{j,k}=\{\xi_j^s: 1\le s\le h_j, \xi_j^s\cap D_k\ne\emptyset\}.\label{I}\EDE
Then $\Xi:=\{\xi_j^s:1\le j\le n, 1\le s\le h_j\}\sem \bigcup_{1\le j<k\le n} I_{j,k}$ is mutually disjoint.
If $\dist(\gamma,z_j)\le r_j$, then $\gamma$ intersects every $\xi_j^s$, $1\le s\le h_j$. So we get
\BGE \PP[\dist(\gamma,z_j)\le r_j,1\le j\le n]\le \PP\Big[\bigcap_{j=1}^n \bigcap_{s=1}^{h_j}\{\gamma\cap \xi_j^s\ne\emptyset\}\Big]
\le \PP\Big[\bigcap_{\xi\in\Xi} \{\gamma\cap\xi\ne\emptyset\}\Big].\label{dist-xi}\EDE

Next, we construct a partition $\{\Xi_e:e\in\cal E\}$ of $\Xi$. First, $\Xi$ has a natural partition $\Xi_j$, $1\le j\le n$, such that $\Xi_j$ is composed of circles centered at $z_j$. For each $j$, we construct a graph $G_j$, whose vertex set is $\Xi_j$, and $\xi_1\ne\xi_2\in \Xi_j$ are connected by an edge iff the bigger radius is $4$ times the smaller one, and the open annulus between them does not contain any other circle in $\Xi$. Let ${\cal E}_j$ denote the set of connected components of $G_j$. Then we partition $\Xi_j$ into $\Xi_e$, $e\in {\cal E}_j$, such that every $\Xi_e$ is the vertex set of $e\in{\cal E}_j$. Then the circles in every $\Xi_e$ are concentric circles with radii forming a geometric sequence with common ratio $1/4$, and the closed annuli $A_e$ associated with $\Xi_e$, $e\in{\cal E}_j$, are mutually disjoint. From the construction we also see that for any $j<k$, and $e\in{\cal E}_j$, $A_e$ does not intersect $D_k$, which contains every $A_e$ with $e\in{\cal E}_k$. Let ${\cal E}=\bigcup_{j=1}^n {\cal E}_j$. Then $A_e$, $e\in\cal E$, are mutually disjoint. Thus, $\{\Xi_e:e\in\cal E\}$ is a partition of $\Xi$ that satisfies the properties before Theorem \ref{key-lem2}. So we get
\BGE\PP\Big[\bigcap_{\xi\in\Xi} \{\gamma\cap\xi\ne\emptyset\}\Big]\le C_{|{\cal E}|} \prod_{e\in\cal E} \frac{P_{y_e}(r_e)}{P_{y_e}(R_e)}=
C_{|{\cal E}|} \prod_{j=1}^n \prod_{e\in {\cal E}_j} \frac{P_{y_j}(r_e)}{P_{y_j}(R_e)}.\label{dist-xi2}\EDE
Here we set $\prod_{e\in{\cal E}_j}=1$ if ${\cal E}_j=\emptyset$. We will finish the proof by comparing $|\cal E|$ with $n$ and the product $\prod_{e\in {\cal E}_j} \frac{P_{y_j}(r_e)}{P_{y_j}(R_e)}$ with $\frac{P_{y_j}(r_j)}{P_{y_j}(R_j)}$.

Here is a useful fact: every $I_{j,k}$ defined in (\ref{I}) contains at most one element. The reason is
$$\frac{\max_{z\in D_k}\{|z-z_j|\}}{\min_{z\in D_k}\{|z-z_j|\}}=\frac{|z_j-z_k|+l_k/4}{|z_j-z_k|-l_k/4}\le \frac{l_k+l_k/4}{l_k-l_k/4}<4.$$
The above formula also implies that, for $j<k$, $\bigcup_{\xi\in\Xi_k}\xi \subset D_k$ intersects at most $2$ annuli from $\{l_j/4^{r}\le |z-z_j|\le l_j/4^{r-1}\}$, $2\le r\le h_j$. If $j>k$, by construction, $\bigcup_{\xi\in\Xi_k}\xi$ is disjoint from the annuli $\{l_j/4^{r}\le |z-z_j|\le l_j/4^{r-1}\}$, $2\le r\le h_j$, which are contained in $D_j$.

We now bound $|{\cal E}_j|$. We may obtain $G$ by removing vertices and edges from a path graph $\ha G_j$, whose vertex set is $\{\xi_j^s: 1\le s\le h_j\}$, and two vertices are connected by an edge iff the bigger radius is $4$ times the smaller one. Every edge $e$ of $\ha G_j$ determines an annulus, denoted by $A_e$. The vertices removed are the elements in $I_{j,k}$, $k>j$; and the edges removed are those $e$ such that $A_e$ intersects some $\xi\in\Xi_k$ with $k\ne j$, which may happen only if $k>j$. Thus, the total number of vertices or edges removed is not bigger than $\sum_{k>j} (1+2)=3(n-j)$. So we get $|{\cal E}_j|\le 1+3(n-j)$. Thus, $|{\cal E}|\le n+\frac{3n(n-1)}{2}$. This means that $C_{|{\cal E}|}$ may be written as $C_n$.

Finally we compare $\prod_{e\in {\cal E}_j} \frac{P_{y_j}(r_e)}{P_{y_j}(R_e)}$ with $\frac{P_{y_j}(r_j)}{P_{y_j}(R_j)}$. If $A$ is an annulus $\{r\le |z-z_0|\le R\}$ for some $z_0\in\lin\HH$ with $y_0\in\Imm z_0\ge 0$ and $R\ge r>0$, we define $P_A=\frac{P_{y_0}(r)}{P_{y_0}(R)}$. Let $A_{j,s}=\{l_j/4^{s}\le |z-z_j|\le l_j/4^{s-1}\}$, $1\le s\le h_j$, and ${\cal S}_j=\{s\in\N_{h_j}:A_{j,s}\subset \bigcup_{e\in\Xi_j} A_e\}$.
 Then
$$\frac{P_{y_j}(r_j)}{P_{y_j}(l_j)}=\prod_{s=1}^{h_j} P_{A_{j,s}},\quad \prod_{e\in {\cal E}_j} \frac{P_{y_j}(r_e)}{P_{y_j}(R_e)}=\prod_{s\in{\cal S}_j} P_{A_{j,s}}.$$
Using (\ref{P-compare}), we get
$$\prod_{e\in {\cal E}_j} \frac{P_{y_j}(r_e)}{P_{y_j}(R_e)}\le 4^{\alpha |\N_{h_j}\sem {\cal S}_j|} \frac{P_{y_j}(r_j)}{P_{y_j}(l_j)}.$$
Now $s\in\N_{h_j}\sem{\cal S}_j$ only if $s=1$ or there is some $k>j$ with $D_k\cap A_{j,s}\ne\emptyset$. Since for $k>j$, $D_k$ intersects at most two $A_{j,s}$, we find that $|\N_{h_j}\sem {\cal S}_j|\le 1+2(n-j)$. Thus,
$$\prod_{j=1}^n \prod_{e\in {\cal E}_j} \frac{P_{y_j}(r_e)}{P_{y_j}(R_e)}\le 4^{\alpha n^2} \prod_{j=1}^n  \frac{P_{y_j}(r_e)}{P_{y_j}(R_e)}.$$
Combining the above formula with (\ref{dist-xi}) and (\ref{dist-xi2}), we complete the proof.
\end{proof}

\begin{proof}[ Proof of Theorem \ref{mainthm}]

As we mentioned before we can define natural length of $SLE$ in a domain by Minkowski content. See equation \eqref{minkcont}. Similarly if $D$ is a bounded subset of the upper half plane we can define $\mcon(\gamma \cap D)$ as the natural length of $SLE$ in the domain $D$ in the obvious way.

%We have
%\[
%\lim_{t \rightarrow \infty} \mcon(\gamma_t \cap D) < \infty,
%\]
%with probability 1. To have easier notation let's call this limit as $\Theta(D)$ which is the notation in \cite{LR}. Also by $\Theta^r(D)$ we mean $\mcon(\gamma_\infty \cap D;r)$.

The main theorem of \cite{LR} becomes
\[
\lim_{r \rightarrow 0} \mcon(\gamma \cap D;r)=\mcon(\gamma \cap D),
\]
with probability 1. Now we compute $$\EE[\mcon(\gamma \cap D;r)^n]=\EE[r^{n(d-2)} ({\rm Area}(z \in D\;| \dist(z,\gamma)<r)^n)$$
$$=r^{n(d-2)}\EE\Big[\Big(\int_{D} 1_{\dist(z,\gamma)<r}dA(z)\Big)^n\Big]$$
$$= \int_{D^n} r^{n(d-2)}\PP(\dist(z_1,\gamma)<r,...,\dist(z_n,\gamma)<r)dA(z_1)...dA(z_n).$$
For the above equality, we changed expectation and integral which is allowed because the integrand is always positive. We will find an upper bound for
\[
\sup\{r^{n(d-2)}\PP(\dist(z_1,\gamma)<r,...,\dist(z_n,\gamma)<r)\},
\]
which is integrable over $D^n$. By Theorem \ref{mainthm2} we know that this is bounded above by
\[
r^{n(d-2)}C_n\prod_{k=1}^n \frac{P_{y_k}(r\wedge l_k)}{P_{y_k}(l_k)}.
\]
Now assume that $r$ is smaller than $l_{i_1},...,l_{i_k}$ and bigger than the rest. Then by equation (\ref{P-compare}) and the definition of $P_y$ we get that the above quantity is bounded by
\[
C_nr^{n(d-2)}\prod_{j=1}^k \frac{r^{2-d}}{l_{i_j}^{2-d}} \le C_n\prod_{s=1}^n {l_{s}^{d-2}}.
\]
We have the last inequality because if $r>l$ then $r^{d-2} < l^{d-2}$. So now we should show
\[
f(z_1,...,z_n)=\prod_{k=1}^n l_k^{d-2}=\prod_{k=1}^n \min\{|z_k-z_0|,|z_k-z_1|,\dots,|z_k-z_{k-1}|\}^{d-2}
\]
is integrable over $D^n$. This is true because for every $1\le k\le n$,
$$\int_D  \min\{|z_k-z_0|,|z_k-z_1|,\dots,|z_k-z_{k-1}|\}^{d-2}dA(z_k)\le \sum_{j=0}^{k-1} \int_D |z_k-z_j|^{d-2}dA(z_k)$$
$$\le k\int _{|z|\le \diam(D\cup\{0\})} |z|^{d-2}dA(z)=2\pi k\int_0^{\diam(D\cup\{0\})} r^{d-1}dr<\infty,$$
as $d>0$. Finally, we may apply Fatou's lemma to conclude that
$$\EE[\mcon(\gamma \cap D)^n]\le \int_D\,\cdots\,\int_D \prod_{k=1}^n l_k(z_1,\dots,z_n) dA(z_1)\cdots dA(z_n)<\infty.$$
\end{proof}


\begin{thebibliography}{00}
%\bibitem{Ahl} Lars V.\ Ahlfors. {\it Conformal invariants: topics in geometric function theory}. McGraw-Hill Book Co., New York, 1973.

\bibitem{Albert-Kozdron} T. Alberts and M. Kozdron .  Intersection probabilities for a chordal SLE path and a semicircle,
 Electron.\ Comm.\ Probab.\ {\bf 13} (2008), 448-460.
%\bibitem{AlbertsSheffield} T. Alberts, S. Sheffield (2008). {\em The Covariant Measure of SLE on the Boundary},
%Prob. Theory Related Fields.  {\em To appear}.

%\bibitem{Bass}  R. Bass (1995).  {\em Probabilistic Techniques
%in Analysis}, Springer-Verlag.

\bibitem{Bf} V. Beffara.  The dimension of SLE curves, Annals of Probab.
{\bf 36} (2008),  1421-1452.

%\bibitem{LJ1} F. Johansson Viklund and G. Lawler (2011). Optimal Holder exponent for the SLE path, Duke Math. J. {\bf 159 }, 351-383.

%\bibitem{Kennedy} T. Kennedy (2005).
%Monte Carlo comparisons of the self-avoiding walk
%and SLE as parameterized curves, preprint.

%
%\bibitem{KS} I. Karatzas and S.Shreve. {\em Brownian motion and stochastic calculus, } Volume 113 of {\em Graduate texts in mathematics.} Springer-Verlag, New york, second edition, 1991.\\

%\bibitem{LLN} S. Lalley, G. Lawler, H. Narayanan (2009).   Geometric interpretation
%of half-plane capacity, Electron. Comm. Probab {\bf 14}, 566-571.


\bibitem{Law1} G. Lawler. {\em Conformally Invariant Processes
in the Plane}, Amer. Math. Soc., Providence, RI, 2005.


\bibitem{Law2} G. Lawler. Schramm-Loewner evolution, in {\em statistical mechanics}, S.Sheffield and T. Spencer, ed., IAS/Park City Mathematical Series, AMS (2009), 231-295.

%\bibitem{Law3} G. Lawler (2012), Continuity of radial and two-sided radial SLE, ArXiv:1104.1620.
\bibitem{Law4} G. Lalwer. Minkowski content of the intersection of a Schramm-Loewner
evolution (SLE) curve with the real line, (2014). In preprint.

\bibitem{LR} G. Lawler and M. Rezaei. Minkowski content and natural parameterization
for the Schramm-Loewner evolution, Annals of Probab. {\bf 43} (2015), 1082--1120.

\bibitem{LR2}  G. Lawler and M. Rezaei. Up-to-constants bounds on the two-point Green's function for SLE curves, Electron. Comm. Probab. {\bf 20} (2015), 1--13.

\bibitem{LS} G. Lawler and S. Sheffield. A natural parametrization for the Schramm-Loewner evolution.   Annals  of Probab.
{\bf 39} (2011), 1896--1937.


\bibitem{LSW} G. Lawler. O. Schramm, and W. Werner
.  Conformal invariance of planar loop-erased
random walks and uniform spanning trees, Annals of
Probab. {\bf 32} (2004), 939--995.


\bibitem{LW}  G. Lawler and  B. Werness .  Multi-point Green's function for SLE and an estimate of Beffara,  Annals of Probab. {\bf 41} (2013), 1513-1555.

\bibitem{LZ}  G. Lawler and W. Zhou .  SLE curves and natural parametrization,   Annals of Probab. {\bf 41} (2013), 1556--1584.

%\bibitem{LSWsaw}  G. Lawler. O. Schramm, and W. Werner
%(2004). On the scaling limit of planar self-avoiding
%walk, in {\em Fractal Geometry and Applications: A Jubilee
%of Benoit Mandelbrot}, Vol. II, M. Lapidus, M. van
%Frankenhuijsen, ed., Amer. Math. Soc.

%\bibitem{Meyer} P. Meyer (1966). {\em Probability and Potentials},
%Blaisdell.

%\bibitem{DM} C. Dellacherie and P. Meyer (1982). {\em Probability and potential}. B, volume 72 of {\em North-Holland
%Mathematics Studies.} North-Holland Publishing Co., Amsterdam.  Theory of %martingales, translated from the French
%by J.P. Wilson.

%\bibitem{Lind}  J. Lind (2008).  H\"older regularity of the SLE trace, Trans. Amer. Math. Soc. 360 , {\bf 7}, 3557--3578.

%\bibitem{Pom-bond} Christian Pommerenke. {\it Boundary behaviour of conformal maps}. Springer-Verlag, Berlin Heidelberg New York, 1991.


\bibitem{RS} S. Rohde and O. Schramm . Basic properties of
SLE, Annals of Math. {\bf 161} (2005), 879--920.


\bibitem{Sch} O. Schramm .
Scaling limits of loop-erased random walks
and uniform spanning trees, Israel J. Math. {\bf 118} (2000), 221--288.


\bibitem{SS} O. Schramm and S. Sheffield .
Harmonic explorer and its
convergence to SLE(4), Annals of Probab. {\bf 33} (2005), 2127--2148.


%\bibitem{SSW} S. Schramm, S. Sheffield and D. Wilson (2009). {\em Conformal radii for conformal loop ensembles},  Commun.Math.Phys. {\bf 288}, 43--53,200.

\bibitem{Smir1} S. Smirnov . Critical percolation in the plane: Conformal invariance, Cardy's formula, scaling limits. C. R. Acad. Sci. Paris Ser. I Math. {\bf 333} (2001), 239--244.


\bibitem{Smir2} S. Smirnov . Conformal invariance in random cluster models. I. Holomorphic fermions in the Ising model, Ann of Math. {\bf 172} (2009), 1435--1467.

%\bibitem{Wer} B. Werness (2011). Regularity of Schramm-Loewner Evolutions, annular crossings, and rough path theory, preprint.


\end{thebibliography}
\end{document}